\documentclass[12pt]{amsart}

\usepackage[utf8]{inputenc}
\usepackage[shortlabels]{enumitem}
\usepackage{microtype}
\usepackage{subcaption}

\usepackage{amsmath}
\usepackage{amssymb}
\usepackage{amsfonts}
\usepackage{latexsym}
\usepackage{stmaryrd}
\usepackage{mathtools}
\usepackage{mathrsfs}
\usepackage{bbm}

\DeclareFontFamily{OT1}{pzc}{}
\DeclareFontShape{OT1}{pzc}{m}{it}%
{<-> s * [1.15] pzcmi7t}{}
\DeclareMathAlphabet{\mathzc}{OT1}{pzc}{m}{it}
\DeclareMathSymbol{\qm}{\mathalpha}{operators}{"3F}

\usepackage{tikz-cd}
\usepackage{quiver}

\newcommand{\cH}{\star}
\newcommand{\cV}{\cdot}

\newcommand{\Id}[1]{\mathrm{Id}_{#1}}
\newcommand{\id}[1]{#1}%
\newcommand{\iid}[1]{\mathrm{id}_{#1}}
\newcommand{\?}{\mathbf{\qm}}

\newcommand{\mc}[1]{\ensuremath{\mathzc{#1}}} 
\newcommand{\mC}[1]{\ensuremath{\operatorname{\mathzc{#1}}}} 
\newcommand{\mFc}[1]{\ensuremath{\operatorname{#1}}} 
\newcommand{\mb}[1]{\mathbbm{#1}} 

\newcommand{\fc}[3]{{#1}: {#2} \rightarrow {#3}}
\newcommand{\nt}[3]{{#1}: {#2} \Rightarrow {#3}}
\newcommand{\morf}[3]{{#2} \xrightarrow{#1} {#3}}


\newcommand{\ldual}[1]{{#1}^{\ell}}
\newcommand{\rdual}[1]{{#1}^{\mathit{r}}}

\newcommand{\Hom}[3][]{\mFc{Hom}_{#1} \left( #2 , #3 \right)}
\newcommand{\iHom}[2]{ \left[ #1 , #2 \right]}

\newcommand{\liHom}[2]{\ldual{\iHom{#1}{#2}}}
\newcommand{\riHom}[2]{\rdual{\iHom{#1}{#2}}}

\newcommand{\curry}[1]{\ulcorner #1 \urcorner}
\newcommand{\uncurry}[1]{\llcorner #1 \lrcorner}



\usepackage{amsmath}
\usepackage{amsthm}

\newtheorem{thm}{Theorem}[section]
\newtheorem{lem}[thm]{Lemma}
\newtheorem{prop}[thm]{Proposition}
\newtheorem{cor}[thm]{Corollary}

\theoremstyle{definition}
\newtheorem{defi}[thm]{Definition}

\theoremstyle{remark}
\newtheorem{ex}[thm]{Example}

\newtheorem{rem}[thm]{Remark}

\newtheorem{nota}[thm]{Notation}

\newcommand{\defTerm}[1]{\textsl{#1}}
\usepackage[colorlinks=true]{hyperref}
\hypersetup{allcolors=[rgb]{0.1,0.1,0.4}}

\date{}
\author{Johan Felipe García Vargas}
\title {Galois correspondence for augmented monads}
\address{Departamento de Matemáticas, Universidad de los Andes,
Bogotá, Colombia.
}
\email{jf.garcia14@uniandes.edu.co}
\keywords{
  Augmented monads;
  Grothendieck's Galois Theory;
  Galois connection;
  Hopf monads;
  Tannaka duality
}
\subjclass{18C15, 18E50, 18M05, 18D15, 16T05}

\begin{document}

\begin{abstract}
We establish a Galois connection between
sub-monads of an augmented monad and
sub-functors of the forgetful functor
from its Eilenberg-Moore category.
This connection is given in terms of
invariants and stabilizers defined through universal properties.
An explicit procedure for the computation of invariants
is given assuming the existence of suitable right adjoints.
Additionally, in the context of monoidal closed categories,
a characterization of stabilizers is made
in terms of Tannakian reconstruction.
\end{abstract}

\maketitle
\setcounter{tocdepth}{1}
\tableofcontents

\section{Introduction} \label{sec:intro}

The structural approach to Galois theory, as develop by Emil Artin and taught in
most algebra textbooks, place as fundamental theorem the correspondence, given
by invariants and stabilizers, between subgroups of symmetries and intermediate
structures.

In \cite[Exposé V]{SGA1}, Grothendieck reframed Galois theory as an equivalence
between a category of interest and the category of actions of a group.
Since the actions of a group \(G\) form the
Eilenberg-Moore category for the monad \(G \times \?\),
Grothendieck's formulation is a monadicity result.
This article proposes a path for recovering
the Galois correspondence from a monadicity result.

Our main hypothesis is that the monad possess
an augmentation. Where, an \defTerm{augmentation}
is a monad homomorphism to the identity of the base category.
This hypothesis is strong, because in the case of Hopf monads
it implies that the monad is \(\otimes\)-representable,
as shown in \cite[Section 5]{BrLaVi-HMMC}.

Morally, having an augmentation distinguish groups from groupoids.
More precisely, the monad induced by a groupoid has an augmentation
only when every morphism in the groupoid is an endomorphism.
However, this makes sense, because there is no clear meaning for
invariants of a groupoid action when it moves elements from a set
to another.

Before presenting the details, let me exhibit the intuition behind them.
Imagine the monad \(T\) as an algebraic gadget useful for ``capturing the
symmetries'' of some kind of structure defined over a category \(\mc{C}\).
With this in mind, we regard the forgetful functor \(U\)
as the ``universal \(T\)-structure''.
A Galois correspondence will emerge from
comparing the action
of a part of the \emph{symmetries},
parametrized by a monad homomorphism \(\nt{h}{S}{T}\),
over a part of the of the \emph{structure},
parametrized by a natural transformation \(\nt{\alpha}{V}{U}\),
against a trivial action induced by an augmentation of the monad.

The next example illustrates the notions that we aim to generalize and
might be considered a trailer for the paper:

\begin{ex} \label{ex:Inv-Stab-Group}
  Let \(G\) be a group. Consider the augmented monad \(G \times \?\) in
  \(\mC{Sets}\), where the augmentation is the projection to the second
  component. The Eilenberg-Moore category \(\mC{Sets}^{G}\) is the category of
  \(G\)-sets and equivariant maps.

  For any group homomorphism \(\morf{\phi}{H}{G}\) the \(H\)-invariants define a
  subfunctor of the forgetful \(\fc{U}{\mC{Sets}^{G}}{\mc{Sets}}\), mapping an action
  \((X, \morf{r}{G \times X}{X})\) to the set inclusion
  \[\mFc{Inv} H_{\phi} (X,r) = \{ x \in X : \; \phi(h) x = x \;
    \text{for all} \; h \in H \} \subseteq X.\]
  We  will dissect this construction in the following way:

  The action \(\morf{r}{G \times X}{X}\) and the augmentation \(\morf{e \times X }{G \times X}{X}\)
  (the \(e\) stands for erase, or for the constant function to the group identity)
  parametrize for every \(x \in X\) functions from \(G\) to \(X\),
  mapping \(g \mapsto (r(g,x) = g x)\) and \(g \mapsto (e(g) x = x)\), respectively.

  Denoting by \(\morf{\iHom{\phi}{X}}{\iHom{G}{X}}{\iHom{H}{X}}\) the function precomposing by \(\morf{\phi}{H}{G}\),
  we can present \(\mFc{Inv} H (X,r) \) as the equalizer of
\(
\begin{tikzcd}[cramped]
  {X} & {\iHom{G}{X}} & {\iHom{H}{X}}.
  \arrow["{\curry{r}}", shift left=1.5, from=1-1, to=1-2]
  \arrow["{\curry{e \otimes X}}"', shift right=1, from=1-1, to=1-2]
  \arrow["{\iHom{\phi}{X}}", from=1-2, to=1-3]
\end{tikzcd}
\)
This presentation of invariants is generalized in
Lemma~\ref{lem:inv-monad-adjoint}.

  On the other hand, for every natural transformation \(\nt{\alpha}{V}{U}\)
  the stabilizer of \(V_{\alpha}\) is the subgroup of \(G\) defined by
  \[ \mFc{Stab} V_{\alpha} = \{g \in G : \; g \alpha_{M}(v) = \alpha_{M}(v) \;
    \text{for every}\; M \in \mFc{Sets}^{G} \;\text{and every}\; v \in VM \}.\]
  Inspired by Tannakian reconstruction, we can also dissect the construction of stabilizers.
  Notice that each \(g \in G\) gives a natural endomorphism of \(U\) obtained by acting,
  explicitly \(\nt{\rho(g)}{U}{U}\) is defined by \(\rho(g)_{M} (x) = r_{M}(g,x) = g x\)
  for every \(\mFc{G}\)-set \(M = (X,r_{M})\) and every \(x\in X\).
  The augmentation similarly defines a natural endomorphism of \(U\) for each \(g \in G\),
  \(\nt{\epsilon(g)}{U}{U}\) defined by \(\epsilon(g)_{M}(x) = e(g) x = x \)
  for every \(\mFc{G}\)-set \(M = (X,r_{M})\) and every \(x\in X\),
  which is clearly the identity of \(U\).

  Denoting by \(\morf{\mFc{Nat} ({\alpha}, {U})}{\mFc{End} (U)}{\mFc{Nat}(V, U) }\) the function precomposing by \(\nt{\alpha}{V}{U}\),
  we can present \(\mFc{Stab} V_{\alpha} \) as the equalizer of
\(
\begin{tikzcd}[cramped]
  {G} & {\mFc{End}(U)} & {\mFc{Nat}(V, U)}.
  \arrow["{\rho}", shift left=1.5, from=1-1, to=1-2]
  \arrow["{\epsilon}"', shift right=1, from=1-1, to=1-2]
  \arrow["{\mFc{Nat}(\alpha, V)}", from=1-2, to=1-3]
\end{tikzcd}
  \)
  This presentation of stabilizers is generalized in
  Lemma~\ref{lem:Stab-as-equal}.

  The well known Galois connection between invariants and stabilizers is
  generalized in Corollary~\ref{cor:Galois-corr}.
\end{ex}

The structure of the document is straightforward.
Section \ref{sec:preliminars} contains three preliminars
one about lifting functors to Eilenberg-Moore categories,
one about monoidal closed categories and Hopf monads
and one about Galois connections.
Section \ref{sec:Inv-Stab} introduces invariants and
stabilizers as universal representants for the fix relation.
Section \ref{sec:Invariants-as-adjoints} is about computing
invariants and Section \ref{sec:Stab-mono} about computing
stabilizers in monoidal closed categories.

\section{Preliminars} \label{sec:preliminars}

\begin{nota}
  We write compositions from right to left. Given natural transformations
  \(\nt{\alpha}{F}{F'}\) and \(\nt{\gamma}{F'}{F''}\), where
  \(\fc{F,F',F''}{\mc{C}}{\mc{D}}\), we denote their \defTerm{vertical composition}
  by \(\nt{\gamma \cV \alpha}{F}{F''}\). If additionally given
  \(\nt{\beta}{G}{G'}\), with \(\fc{G,G'}{\mc{D}}{\mc{E}}\), we denote the
  \defTerm{horizontal composition} by \(\nt{\beta \cH \alpha}{GF}{G'F'}\). We will
  abbreviate identities of functors (and of objects) as $\iid{G} = \id{G}$. We
  will also omit the $\cH$ when \defTerm{whiskering}. For instance,
  \( \beta \cH \alpha = (\id{G'} \alpha) \cV (\beta \id{F}) = (\beta \id{F'}) \cV (\id{G} \alpha). \)
\end{nota}

\subsection{Monadic lifting to Eilenberg-Moore categories} \label{sec:monadic-lift}

It is well known, see \cite[Chapter VI]{MacLaneCategories} for instance, that
every adjunction yields a monad and that the Eilenberg-Moore construction allow
to decompose a monad into an adjunction in a ``universal'' way. Where by
universal, we mean that every other adjunction, yielding the same monad, is
included in it. It is less well known, that the lift of functors to
Eilenberg-Moore categories can be understood in terms of natural
transformations of the base categories. We will make a quick review of this and
refer the reader to \cite[Chapter 2]{boehm18_hopf_algeb_their_gener_categ} for
the details.

We denote monads by \((T, \mu, \eta)\) and adjunctions by \((\eta, L \dashv R, \varepsilon)\) as displayed in the next figure:

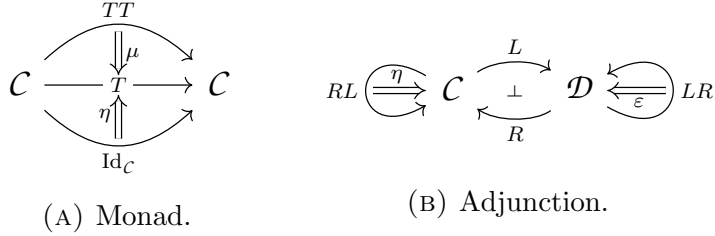
\begin{figure}[h]\label{fig:monad-adjoint} \begin{subfigure}{.4\linewidth}
    \[
  \begin{tikzcd}
    \mc{C} & & \mc{C}
    \arrow[""{name=0, anchor=center, inner sep=0}, "T"{description}, from=1-1, to=1-3]
    \arrow[""{name=1, anchor=center, inner sep=0}, "{\Id{\mc{C}}}"', shift right=1, curve={height=18pt}, from=1-1, to=1-3]
    \arrow[""{name=2, anchor=center, inner sep=0}, "TT", shift left=1, curve={height=-18pt}, from=1-1, to=1-3]
    \arrow["\eta", shorten <=2pt, shorten >=3pt, Rightarrow, from=1, to=0]
    \arrow["\mu", shorten <=2pt, shorten >=3pt, Rightarrow, from=2, to=0]
  \end{tikzcd}
    \]
    \caption{Monad.}
  \end{subfigure}
  \begin{subfigure}{.4\linewidth}
    \[
  \begin{tikzcd}
    \mc{C} & \mc{D}
    \arrow[""{name=0, anchor=center, inner sep=0}, "L", curve={height=-12pt}, from=1-1, to=1-2]
    \arrow[""{name=1, anchor=center, inner sep=0}, "R", curve={height=-12pt}, from=1-2, to=1-1]
    \arrow[""{name=3, anchor=center, inner sep=0}, "RL"', loop, out=150, in=210,looseness=8, from=1-1, to=1-1]
    \arrow[""{name=4, anchor=center, inner sep=0}, "LR"', loop, out=-30, in=30,  looseness=8, from=1-2, to=1-2]
    \arrow["\dashv"{anchor=center, rotate=-90}, draw=none, from=1, to=0]
    \arrow["\eta", shorten <=2pt, Rightarrow, from=3, to=1-1]
    \arrow["\varepsilon", shorten <=2pt, Rightarrow, from=4, to=1-2]
  \end{tikzcd}
    \]
    \caption{Adjunction.}
  \end{subfigure}
    \caption{Structure of monads and adjunctions.}
\end{figure}

\begin{defi}\label{def:Eilenberg-Moore}
  Let \((T, \mu, \eta)\) be a monad over a category $\mc{C}$.
  The \defTerm{Eilenberg-Moore category} of $T$,
  denoted $\mc{C}^T$, has $T$-actions as objects
  and $T$-morphisms as morphisms.

  \begin{enumerate}[(a)]
    \item A $T$-\defTerm{action} is a pair $M = (X, \morf{r_{M}}{TX}{X})$, where $X$ is in $\mc{C}$ and $r$ satisfies:
    \begin{align*}
      \text{associativity}& &r_{M} \mu_{X} &= r Tr_{M}, \\
      \text{and unity,}&    &r_{M} \eta_X  &= \id{X}.
    \end{align*}

    \item A $T$-\defTerm{morphism} $\morf{f}{M}{N}$,
          between $T$-actions $M = (X,r)$ and $N = (Y,r')$,
          is a morphism $\morf{f}{X}{Y}$ satisfying $ fr = r'Tf$.

    \item The \defTerm{forgetful functor} $\fc{U^T}{\mc{C}^T}{\mc{C}} $, defined by
      \[U^T (\morf{f}{(X,r)}{(Y,r')}) = (\morf{f}{X}{Y}),\]
      has a left adjoint $\fc{F^T}{\mc{C}}{\mc{C}^T}$.
      This functor is known as the \defTerm{free functor}  and
      it is defined by
      \[F^T X = ( TX , \morf{\mu_{X}}{TTX}{TX} ) \quad \text{ and } \quad F^T (\morf{f}{X}{Y}) = \morf{Tf}{(TX,\mu_X)}{(TY,\mu_Y)}.\]

      Notice that $T = U^T F^T$, hence the unit of the adjunction
      $\nt{\eta}{\Id{\mc{C}}}{U^T F^T}$ is equal to the unit of the monad $\nt{\eta}{\Id{\mc{C}}}{T}$.
      Moreover, the evaluation $\nt{\varepsilon}{F^T U^T}{\Id{\mc{C}^T}}$ is given by $\varepsilon_M = \morf{r}{(TX,\mu_X)}{(X,r)}$,
      for any $T$-action $M = (X, \morf{r}{TX}{X})$.

    \item Suppose that there is another adjunction $(\eta, L \dashv R, \upsilon)$
          with $\fc{R}{\mc{D}}{\mc{C}}$ such that $T = R L$ and $\mu = R \upsilon L$.
      The \defTerm{comparison functor} $\fc{K}{\mc{D}}{\mc{C}^T}$ is defined by
          \[K A = (R A, \morf {R \upsilon_A}{R L R A }{ R A } ),\]
      for every object $A$ in $\mc{D}$.
      It is easy to verify that $K A$ is a $T$-action and obviously $U^T K = R$.

    \item A functor $\fc{R}{\mc{D}}{\mc{C}}$ is called \defTerm{monadic} if it has a left adjoint , $L \dashv R$,
          and the comparison functor is an equivalence
      from \(\mc{D}\) to the Eilenberg-Moore category \(\mc{C}^{T}\)
          of the induced monad $T = R L$.
  \end{enumerate}
\end{defi}
\begin{defi}\label{def:Lifting-data}
  Let $(T,\mu, \eta)$ and $(T',\mu', \eta')$ be monads over $\mc{C}$ and
  $\mc{C}'$, respectively. Given a functor $\fc{G}{\mc{C}}{\mc{C}'}$, we say
  that:
  \begin{itemize}
    \item A \defTerm{lift of} $G$ is a functor $\fc{\overline{G}}{\mc{C}^T}{\mc{C}'^{T'}}$
          such that $U^{T'} \overline{G} = G U^{T} $.

    \item A \defTerm{lifting data for} $G$ is a natural transformation
          $\nt{\gamma}{T' G}{G T}$ such that
    \begin{align*}
      & &\gamma \cV (\mu' \id{G}) &= (\id{G} \mu) \cV (\gamma \id{T}) \cV (\id{T'} \gamma),\\
      &\text{  and } & \gamma \cV (\eta' \id{G}) &= \id{G} \eta.
    \end{align*}
    \item When $\mc{C}=\mc{C}'$, an \defTerm{homomorphism of monads}
      $\nt{h}{T'}{T}$ is a lifting data for $\Id{\mc{C}}$. In that case, the above
      axioms simplify to $h \cV \mu' = \mu \cV (h \cH h)$ and $h \cV \eta' = \eta$.
  \end{itemize}
\end{defi}

\begin{thm} \label{thm:lifts-eq-lifting-data}
  {\cite[Theorem and definition 2.27]{boehm18_hopf_algeb_their_gener_categ}}
  Let $T$, $T'$ and $G$ as in the previous definition.
  There is a biyection between lifts of $G$ and lifting data for $G$.
\end{thm}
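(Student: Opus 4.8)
The plan is to define assignments in both directions between lifting data and lifts, check that each lands in the intended set, and then verify that the two round-trips are identities. Going from lifting data $\nt{\gamma}{T'G}{GT}$ to a lift is the easy direction. Since the constraint $U^{T'} \overline{G} = G U^{T}$ forces the underlying object of $\overline{G}(X,r)$ to be $GX$ and the underlying morphism of $\overline{G}(f)$ to be $Gf$, there is essentially no choice but to set
\[ \overline{G}(X, \morf{r}{TX}{X}) = (GX,\ Gr \cdot \gamma_X), \qquad \overline{G}(\morf{f}{M}{N}) = Gf. \]
The unit axiom of $\gamma$ together with the unit law of $r$ gives the $T'$-action unit law for $Gr \cdot \gamma_X$; the $\mu'$-compatibility axiom of $\gamma$ together with associativity of $r$ gives its associativity; and naturality of $\gamma$ together with $f$ being a $T$-morphism shows $Gf$ is a $T'$-morphism. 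These are routine diagram chases.

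For the reverse assignment I would read off $\gamma$ from the action of $\overline{G}$ on free objects. Applying $\overline{G}$ to $F^{T} X = (TX, \mu_X)$ yields, by $U^{T'}\overline{G} = GU^{T}$, a $T'$-action on $GTX$ whose structure map I denote $\morf{\beta_X}{T'GTX}{GTX}$, and I then define
\[ \gamma_X = \beta_X \cdot T'(G \eta_X). \]
Naturality of $\gamma$ follows from naturality of $\beta$ (the $T'$-morphism property of $\overline{G}$ on the free morphisms $Tf$) together with naturality of $\eta$; the unit axiom follows from naturality of $\eta'$ and the unit law of the $T'$-action $\beta_X$.

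I expect the main obstacle to be the $\mu'$-compatibility axiom for this reconstructed $\gamma$. The key fact I would isolate is the identity $\beta_X = G\mu_X \cdot \gamma_{TX}$, which re-expresses the free action back in terms of $\gamma$. To prove it I would apply $\overline{G}$ to the morphism $\morf{\mu_X}{F^{T}(TX)}{F^{T}X}$ — which is a $T$-morphism exactly by monad associativity (indeed it is the counit component $\varepsilon_{F^T X}$) — so that $\overline{G}(\mu_X)$, being a $T'$-morphism with underlying map $G\mu_X$, yields $G\mu_X \cdot \beta_{TX} = \beta_X \cdot T'(G\mu_X)$; precomposing with $T'(G\eta_{TX})$ and using the monad unit law $\mu_X \cdot \eta_{TX} = \iid{TX}$ collapses the result to $\beta_X = G\mu_X \cdot \gamma_{TX}$. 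Granting this, the left side $\gamma_X \cdot \mu'_{GX}$ of the axiom reduces, via naturality of $\mu'$ and associativity of the action $\beta_X$, to $\beta_X \cdot T'\gamma_X$, while the right side $G\mu_X \cdot \gamma_{TX} \cdot T'\gamma_X$ equals the same after substituting the identity.

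Finally I would close the two round-trips. Starting from $\gamma$, the induced lift sends $F^{T}X$ to $(GTX,\ G\mu_X \cdot \gamma_{TX})$, so the datum recovered from it is $G\mu_X \cdot \gamma_{TX} \cdot T'(G\eta_X)$, which by naturality of $\gamma$ and the monad unit law $\mu_X \cdot T\eta_X = \iid{TX}$ equals $\gamma_X$. Starting instead from a lift $\overline{G}$, the reconstructed lift agrees with it on morphisms (both act as $Gf$), and on objects one checks that the $T'$-action of $\overline{G}(X,r)$ equals $Gr \cdot \gamma_X$ by applying $\overline{G}$ to the counit $\varepsilon_M = \morf{r}{F^{T}X}{M}$ and invoking the $T$-action unit law $r \cdot \eta_X = \iid{X}$. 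Since a morphism of the Eilenberg-Moore category is determined by its underlying morphism, both composites are identities, which establishes the bijection.
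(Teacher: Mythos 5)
Your proof is correct and complete: both constructions, the key identity $\beta_X = G\mu_X \cdot \gamma_{TX}$ obtained by applying $\overline{G}$ to the counit on free objects, and both round-trip verifications all check out. The paper itself does not prove this theorem but cites it from B\"ohm's book, and your argument is essentially the standard one given there, so there is nothing further to reconcile.
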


As a special case of the above theorem, taking the identity as base functor (i.e
\(\mc{C}=\mc{C}'\) and \(G=\Id{\mc{C}}\)), we obtain the following corollary:

\begin{cor}\label{cor:Monad-hom-funtors}
  Let \(T\) and \(T'\) be monads over \(\mc{C}\). There is a biyection between monad
  homomorphisms \(\nt{h}{T'}{T}\) and functors \(\fc{H}{\mc{C}^T}{\mc{C}^{T'}}\)
  such that \(U^{T'} H = U^T\).
\end{cor}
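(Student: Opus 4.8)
The plan is to obtain the statement as the promised specialization of Theorem~\ref{thm:lifts-eq-lifting-data}, taking $\mc{C}' = \mc{C}$ and $G = \Id{\mc{C}}$. Under this substitution a lifting data for $\Id{\mc{C}}$ is, by the last bullet of Definition~\ref{def:Lifting-data}, precisely a monad homomorphism $\nt{h}{T'}{T}$; and a lift of $\Id{\mc{C}}$ is a functor $\fc{\overline{G}}{\mc{C}^T}{\mc{C}^{T'}}$ satisfying $U^{T'} \overline{G} = \Id{\mc{C}} U^T = U^T$, which is exactly a functor $H$ as in the statement. Hence the bijection asserted by the theorem becomes the claimed bijection between monad homomorphisms $\nt{h}{T'}{T}$ and functors $H$ over $\mc{C}$, and the whole argument reduces to observing that these two identifications are literal, which is immediate from the definitions.

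For completeness, and because the explicit form of $H$ will be convenient later, I would also spell out the bijection by hand. Given a monad homomorphism $\nt{h}{T'}{T}$, I would define $H$ on objects by $H(X, \morf{r}{TX}{X}) = (X, \morf{r\, h_X}{T'X}{X})$ and on morphisms as the identity on underlying arrows, so that $U^{T'} H = U^T$ holds by construction. I would then check that $(X, r\, h_X)$ is a genuine $T'$-action: unity follows from $h \cV \eta' = \eta$ together with the unity of $r$, while associativity follows from the homomorphism axiom $h \cV \mu' = \mu \cV (h \cH h)$ and the naturality of $h$, used to slide $h$ past $r$. The same naturality shows that every $T$-morphism $f$ is automatically a $T'$-morphism for the transported actions, so $H$ is a well-defined functor.

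Conversely, to recover $h$ from a functor $\fc{H}{\mc{C}^T}{\mc{C}^{T'}}$ with $U^{T'} H = U^T$, I would evaluate $H$ on the free $T$-actions $F^T X = (TX, \mu_X)$. Since $U^{T'} H F^T = U^T F^T = T$, the image $H(F^T X)$ has underlying object $TX$, hence is a $T'$-action $(TX, \morf{\rho_X}{T'(TX)}{TX})$ for a unique $\rho_X$, and I would set $h_X = \rho_X\, T'\eta_X$, using the unit $\nt{\eta}{\Id{\mc{C}}}{T}$. Functoriality of $H$ on the free morphisms $F^T f$ gives the naturality of $\rho$ and hence of $h$, and combined with the $T'$-action axioms for $\rho_X$ it yields the two homomorphism axioms; a short computation using the right unit law $\mu \cV (T\eta) = \id{T}$ confirms that this assignment inverts the forward construction.

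The main obstacle lies entirely in this last direction: verifying that the maps $\rho_X$ assembled from the free algebras are natural and that the induced $h$ satisfies the homomorphism axioms and inverts $H \mapsto h$. This is exactly the nontrivial content of Theorem~\ref{thm:lifts-eq-lifting-data}, so in practice I would not redo it and would simply invoke the cited theorem; only the forward construction above needs to be recorded, since it fixes the explicit $H$ that the rest of the paper relies on.
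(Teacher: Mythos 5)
Your proposal is correct and follows the paper's own route exactly: the corollary is obtained by specializing Theorem~\ref{thm:lifts-eq-lifting-data} to $\mc{C}'=\mc{C}$ and $G=\Id{\mc{C}}$, where a lifting datum is by definition a monad homomorphism and a lift is a functor $H$ with $U^{T'}H=U^T$. Your explicit description $HM=(X,r_M h_X)$ also matches the formula the paper records later in Remark~\ref{rem:Notation-monad-hom}, so the additional spelled-out bijection is consistent with, though not required by, the paper's argument.
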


The next proposition gives a criterion for lifting natural transformations:

\begin{prop}\label{prop:lifts_nts}
  {\cite[Theorem and definition 2.30]{boehm18_hopf_algeb_their_gener_categ}}
  Given a natural transformation $\nt{\omega}{G}{H}$ between functors from
  $\mc{C}$ to $\mc{C'}$ with lifting data $\nt{\alpha}{T'G}{GT}$ and
  $\nt{\beta}{T'H}{HT}$ along the monads $T$ and $T'$.
  There is an unique $\nt{\bar{\omega}}{G^{\alpha}}{H^{\beta}}$ which lifts
  $\omega$ (i.e. $\id{U^{T'}} \bar{\omega} = \omega \id{U^T} $), if and only if,
  $(\omega \id{T}) \cV \alpha = \beta \cV (\id{T'} \omega)$.
\end{prop}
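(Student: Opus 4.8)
The plan is to make everything explicit through the bijection of Theorem~\ref{thm:lifts-eq-lifting-data}. Recall that the lift $G^{\alpha}$ sends a $T$-action $M = (X, r_{M})$ to the $T'$-action $G^{\alpha} M = (GX,\; Gr_{M} \circ \alpha_{X})$ and acts as $G$ on underlying morphisms; likewise $H^{\beta} M = (HX,\; Hr_{M} \circ \beta_{X})$. Since the forgetful functor $U^{T'}$ is faithful and the lifting condition $\id{U^{T'}} \bar{\omega} = \omega \id{U^{T}}$ forces $U^{T'}(\bar{\omega}_{M}) = \omega_{X}$ for every $M = (X, r_{M})$, the component $\bar{\omega}_{M}$ is uniquely determined as the $T'$-morphism with underlying arrow $\omega_{X}$ --- whenever such a $T'$-morphism exists. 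This already disposes of uniqueness, so the genuine content is the \emph{existence} of these components, i.e. whether $\omega_{X}$ is $T'$-equivariant for every $M$.

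For the ``if'' direction I would take $\bar{\omega}_{M} := \omega_{X}$ and check $T'$-equivariance $G^{\alpha}M \to H^{\beta}M$ directly: the required identity $\omega_{X} \circ Gr_{M} \circ \alpha_{X} = Hr_{M} \circ \beta_{X} \circ T'\omega_{X}$ follows by first using naturality of $\omega$ at $r_{M}$ to rewrite $\omega_{X} \circ Gr_{M} = Hr_{M} \circ \omega_{TX}$, and then applying the hypothesis in its componentwise form $\omega_{TX} \circ \alpha_{X} = \beta_{X} \circ T'\omega_{X}$. Naturality of the family $\{\bar{\omega}_{M}\}$ reduces, again by faithfulness of $U^{T'}$ and the fact that $G^{\alpha}, H^{\beta}$ act as $G, H$ on arrows, to naturality of $\omega$. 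Thus $\bar{\omega}$ exists and lifts $\omega$.

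The ``only if'' direction is where I expect the real work, since from equivariance on \emph{all} actions I must extract the single natural identity $(\omega \id{T}) \cV \alpha = \beta \cV (\id{T'}\omega)$. The plan is to evaluate at the free actions $F^{T} X = (TX, \mu_{X})$. The $\mu$-axiom of the lifting data says precisely that $\alpha_{X} \colon F^{T'} GX \to G^{\alpha} F^{T} X$ and $\beta_{X} \colon F^{T'} HX \to H^{\beta} F^{T} X$ are $T'$-morphisms. Granting existence, $\bar{\omega}_{F^{T}X}$ is also a $T'$-morphism, so both $\bar{\omega}_{F^{T}X} \circ \alpha_{X}$ and $\beta_{X} \circ F^{T'}\omega_{X}$ are $T'$-morphisms $F^{T'} GX \to H^{\beta} F^{T} X$, with respective underlying arrows $\omega_{TX} \circ \alpha_{X}$ and $\beta_{X} \circ T'\omega_{X}$. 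Since their common domain is free, the adjunction $F^{T'} \dashv U^{T'}$ lets me compare them after precomposition with the unit $\eta'_{GX}$; using the unit-axiom of the lifting data ($\alpha_{X} \circ \eta'_{GX} = G\eta_{X}$ and $\beta_{X} \circ \eta'_{HX} = H\eta_{X}$) together with naturality of $\omega$ and of $\eta'$, both sides collapse to $H\eta_{X} \circ \omega_{X}$. Hence the two $T'$-morphisms coincide, their underlying arrows agree, and the sought identity holds componentwise. The crux is exactly this free-algebra reduction: it is what converts the ``local'' equivariance guaranteed by $\bar{\omega}$ into the ``global'' compatibility between $\alpha$, $\beta$ and $\omega$.
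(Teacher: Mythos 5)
Your proof is correct and complete; the paper itself offers no proof of this proposition, citing it to B\"ohm's book, so there is nothing internal to compare against, but your argument is the standard one. Both directions check out: uniqueness via faithfulness of $U^{T'}$, existence in the ``if'' direction via naturality of $\omega$ at $r_{M}$ plus the componentwise hypothesis, and the ``only if'' direction via the observation that the $\mu$-axiom makes $\alpha_{X}$ and $\beta_{X}$ into $T'$-morphisms out of free actions, so that the adjunction $F^{T'}\dashv U^{T'}$ reduces the comparison to a computation with the units (equivalently, one can precompose the equivariance identity at $F^{T}X$ with $T'G\eta_{X}$ and use $\mu_{X}\circ T\eta_{X}=\id{TX}$ --- the same reduction in different packaging).
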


\subsection{Monoidal closed categories and Hopf monads} \label{sec:Hopf-monads}
A \defTerm{monoidal category} is a category \mc{C} endowed with
a functor \(\fc{\otimes}{\mc{C}\times\mc{C}}{\mc{C}}\) called \defTerm{monoidal product},
an object $\mb{1}$ in $\mc{C}$ called \defTerm{unit object},
and natural isomorphisms called associativity and unit constrains
which are subject to well known axioms of coherence.
In virtue of the strictness theorems,
--- see \cite[Lemma 3.7 and 3.8]{boehm18_hopf_algeb_their_gener_categ}, \cite[Section VII.2]{MacLaneCategories} or \cite[Section 2.8]{Etin-TC}  ---
it is unnecessary to explicitly keep track of
the associative or unit constraints in any monoidal category,
as long as you are only interested in properties up to
monoidal equivalence of categories.

One key feature of monoidal categories is that an object \(X\),
can be regarded as an endofunctor \(X \otimes \?\);
this kind of functors are called \(\otimes\)-\defTerm{representable}.
The same idea applies to morphisms and natural transformations.
For instance, \(\otimes\)-representable monads correspond to monoids.
Furthermore, the Eilenberg-Moore category of such a monad is
the category of actions of the monoid.

Bimonads are monads which lift the monoidal product to the
Eilenberg-Moore category.
More precisely, a monad \(T\) over \((\mC{C}, \otimes. \mb{1})\) is a \defTerm{bimonad}
if and only if \(C^{T}\) is also monoidal and the forgetful functor
\(\fc{U^{T}}{\mc{C}^{T}}{\mc{C}}\) is strong monoidal.
By means of Theorem \ref{thm:lifts-eq-lifting-data},
Bimonads are characterized as opmonoidal monads, see \cite[Theorem 3.19]{boehm18_hopf_algeb_their_gener_categ}.

A monoidal category is \defTerm{left closed} if every \(\otimes\)-representable
functor \(\? \otimes X\) has a right adjoint, denoted \(\liHom{X}{\?}\).
 In this case, we call \(\liHom{X}{Z}\) the \defTerm{left internal Hom}
  from \(X\) to \(Z\).
Since
the adjunction
\((\eta^{X}, \? \otimes X \dashv \liHom{X}{\?}, \varepsilon^{X})\), gives a
biyection between \(\Hom{Y \otimes X}{Z}\) and \(\Hom{Y}{\liHom{ X}{Z}}\).
Borrowing the terminology from computer science, we define the
\defTerm{currying} of any \(\morf{f}{Y \otimes X}{Z}\) as
\(\curry{f} = \liHom{X}{f}\eta^{X}_{Y} \), and the \defTerm{uncurrying} of
\(\morf{g}{Y}{\liHom{X}{Z}}\) as
\(\uncurry{g} = \varepsilon^{X}_{Z} (g \otimes X)\).
It is worth noticing that by the theorem on adjunctions with a
  parameter --- see \cite[Theorem 3 in IV.7.]{MacLaneCategories} ---
  \(\liHom{\?_{1}}{\?_{2}}\) is functor from \(\mc{C}^{op} \otimes \mc{C}\) to
  \(\mc{C}\), when \(\mc{C}\) is left monoidal closed.

When the functor \(\liHom{X}{\?}\) is \(\otimes\)-representable,
i.e. \(\liHom{X}{\?} = \? \otimes \ldual{X}\) we call \(\ldual{X}\) the \defTerm{left dual} of \(X\).
A monoidal category is \defTerm{left rigid} (also called compact or autonomous)
if every object has a left dual.

Since a bimonad \(T\) lifts the monoidal product,
for every \(T\)-action \(M=(X,r_{M})\) the functor
\(\fc{\? \otimes M}{\mc{C}^{T}}{\mc{C}^{T}}\) is a lift
of \(\fc{\? \otimes X}{\mc{C}}{\mc{C}}\).
The bimonad \(T\) is a \defTerm{left Hopf monad} if whenever \(\? \otimes X\) has a right adjoint,
the adjoint lifts to an adjoint of \(\? \otimes M\).
Consequently, if \(\mc{C}\) is left monoidal closed (resp. rigid) and
\(T\) is a left Hopf monad, then \(\mc{C}^{T}\) is also left closed (resp. rigid)
and the forgetful functor preserves internal Homs.
Left Hopf monads are characterized by the invertibility of the left fusion operator,
see \cite[Theorem 3.6]{BrLaVi-HMMC} or
\cite[Theorem 3.27]{boehm18_hopf_algeb_their_gener_categ}.

Another key feature of monoidal categories is that,
in addition to the usual duality between
a category and its opposite, there is a left-right duality obtained by
exchanging the order in the monoidal product.
Implying that there are right variants for each of the above defined concepts.
When the laterality is omitted it is tacitly understood that both variants holds.
For instance, a rigid category is both left and right rigid.

A gadget for comparing left and right \(\otimes\)-representable functors
is called a braiding. More precisely,
a \defTerm{lax half-braiding} on a monoidal category \(\mc{C}\) is a pair \((A,\sigma)\) where $A$ is
  an object and \(\nt{\sigma}{A \otimes \?}{\? \otimes A}\) is a
  \(\otimes\)-\defTerm{multiplicative} natural transformation.
  Where \(\otimes\)-multiplicative means that \(\sigma_{\mb{1}} = \iid{A} \) and
  \[\sigma_{X \otimes Y} = (\id{X} \otimes \sigma_Y )(\sigma_X \otimes \id{Y}), \]
  for arbitrary \(X\) and \(Y\) in \(\mc{C}\).
  The adjective lax is dropped when \(\sigma\) is an isomorphism.
  A \defTerm{braiding} is a \(\otimes\)-multiplicative choice of a half braiding for every
  object in \(\mc{C}\).

  A \defTerm{lax central bimonoid} is simultaneously a lax central comonoid and monoid such that the multiplication and unit are comonoid homomorphisms.
  Explicitly, for a lax central bimonoid \(A = (A, \sigma, \delta, \mu, \varepsilon, \eta)\):
  \((A, \sigma)\) is a lax half-braiding,
  \(( A, \delta, \varepsilon )\) is a comonoid,
  \((A, \mu, \eta)\) is a monoid,
  and they satisfy \((\mu \otimes \mu)(A \otimes \sigma_{A} \otimes A)(\delta \otimes \delta) = \delta \mu\),
  \(\varepsilon \otimes \varepsilon = \varepsilon \mu\), \(\eta \otimes \eta = \delta \eta\) and \( \varepsilon \eta = \id{\mb{1}}\).
  Every lax central bimonoid defines a bimonad given by the functor \(A\otimes \?\),
  see \cite[Lemma 5.6]{BrLaVi-HMMC}.

  A lax central bimonoid \(H = (H, \sigma, \delta, \mu, \varepsilon, \eta)\) is
  called \defTerm{lax Hopf monoid} if the fusion morphism
  \((H \otimes \mu)(\delta \otimes H)\) is an isomorphism. By the the
  relation between convolution and fusion morfism,
  a lax Hopf monoid has an \defTerm{antipode}
  \(\morf{s}{H}{H}\) such that
  \[\mu (s \otimes H) \delta = \eta \varepsilon = \mu (H \otimes s) \delta.\]
  The adjective lax is dropped when both \(\sigma\) and \(s\) are isomorphisms.

  Since this paper is about augmented monads is very significant for us
  that in the Hopf case every augmented monad is representable.

\begin{thm}\label{thm:augmented-Hopf-monads}
  {\cite[Corollary 5.18]{BrLaVi-HMMC}}
  There is an equivalence between the categories of
  augmented left Hopf monads and lax central Hopf monoids,
  which restricts to an equivalence between augmented Hopf monads and
  central Hopf monoids.
\end{thm}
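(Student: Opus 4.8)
The plan is to realize the claimed equivalence as a pair of mutually inverse functors, building an augmented left Hopf monad from a lax central Hopf monoid in one direction and recovering the monoid from the representability of augmented Hopf monads in the other. The recurring tool is the \emph{representability dictionary}: between $\otimes$-representable endofunctors, a $\otimes$-representable natural transformation is determined by, and determines, a single morphism of representing objects, and composition/whiskering of such transformations corresponds to composition/tensoring of those morphisms. I would use this dictionary to pass freely between monad-theoretic identities over $\mc{C}$ and morphism identities for the representing object.

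\textbf{From monoids to monads.} Fix a lax central bimonoid $H = (H, \sigma, \delta, \mu, \varepsilon, \eta)$ which is a lax Hopf monoid, with antipode $s$. By \cite[Lemma 5.6]{BrLaVi-HMMC} the endofunctor $H \otimes \?$ is a bimonad, its monad data coming from the monoid $(H, \mu, \eta)$ and its opmonoidal data assembled from the comonoid $(H, \delta, \varepsilon)$ together with the half-braiding $\sigma$, which supplies the swap splitting $H \otimes (X \otimes Y)$ into $(H \otimes X) \otimes (H \otimes Y)$. I would equip it with the augmentation $\nt{a}{H \otimes \?}{\Id{\mc{C}}}$ represented by the counit $\varepsilon\colon H \to \mb{1}$. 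The augmentation axioms $a \cV (\mu \otimes \?) = a \cH a$ and $a \cV (\eta \otimes \?) = \id{\Id{\mc{C}}}$ translate, after stripping the representing argument, into the bimonoid identities $\varepsilon \mu = \varepsilon \otimes \varepsilon$ and $\varepsilon \eta = \id{\mb{1}}$, which hold by hypothesis, so $a$ is indeed an augmentation. To see that $H \otimes \?$ is a \emph{left} Hopf monad I would compute its left fusion operator and identify it, via the dictionary, with the bimonoid fusion morphism $(H \otimes \mu)(\delta \otimes H)$; invertibility of the latter (the defining property of a lax Hopf monoid) yields invertibility of the former, which is the left Hopf condition of \cite[Theorem 3.6]{BrLaVi-HMMC}.

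\textbf{From monads to monoids.} This is where the augmentation earns its keep. Given an augmented left Hopf monad $(T, \mu, \eta, a)$, I would first invoke the representability of augmented Hopf monads from Section 5 of \cite{BrLaVi-HMMC} to obtain a bimonad isomorphism $T \cong A \otimes \?$ with $A = T\mb{1}$. I then transport the structure across this isomorphism: the monad multiplication and unit endow $A$ with a monoid; the opmonoidal constraints produce a comultiplication $\delta$ and a counit; the fact that $T$ lifts the monoidal product to $\mc{C}^{T}$ supplies the half-braiding $\sigma$, making $(A,\sigma)$ carry a lax half-braiding; and invertibility of the left fusion operator becomes invertibility of $(A \otimes \mu)(\delta \otimes A)$, hence an antipode $s$. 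One must also check that the morphism representing the augmentation $a$ coincides with the comonoid counit, so that nothing is lost and the two directions agree on this datum.

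On morphisms both functors act through the same dictionary: a morphism of augmented Hopf monads — a monad homomorphism commuting with the augmentations — corresponds to a morphism of representing objects compatible with monoid, comonoid, braiding, and counit, that is, a morphism of lax central Hopf monoids, and one verifies functoriality directly. The two functors are mutually inverse because $H \otimes \mb{1} \cong H$ recovers the monoid with all its structure, while the representability isomorphism $T \cong (T\mb{1}) \otimes \?$ is natural and structure-preserving; the stated restriction then follows by tracking two invertibility conditions, since dropping \emph{lax} amounts to requiring that the half-braiding $\sigma$ and the antipode $s$ be isomorphisms, which matches the passage from augmented left Hopf monads to augmented Hopf monads. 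I expect the main obstacle to lie in the backward direction: verifying that the transported opmonoidal data satisfy the central bimonoid compatibility $(\mu \otimes \mu)(A \otimes \sigma_{A} \otimes A)(\delta \otimes \delta) = \delta \mu$ and that $\sigma$ is $\otimes$-multiplicative, since both require threading the opmonoidal-monad coherence and the lifted monoidal structure through the representability isomorphism — together with the reconciliation of the augmentation and the counit noted above.
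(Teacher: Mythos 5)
First, a point of order: the paper offers no proof of this statement. Theorem~\ref{thm:augmented-Hopf-monads} is quoted verbatim with a citation to \cite[Corollary 5.18]{BrLaVi-HMMC}, so there is no in-paper argument to compare yours against. Judged as a standalone proof, your outline follows the right overall strategy (two representability-dictionary functors, mutually inverse via $H \otimes \mb{1} \cong H$ and $T \cong T\mb{1} \otimes \?$), and the direction from lax central Hopf monoids to augmented left Hopf monads is essentially complete: \cite[Lemma 5.6]{BrLaVi-HMMC} supplies the bimonad structure on $H \otimes \?$, the counit $\varepsilon$ represents the augmentation (the axioms $\varepsilon\mu = \varepsilon \otimes \varepsilon$ and $\varepsilon\eta = \id{\mb{1}}$ are exactly the bimonoid identities listed in Section~\ref{sec:Hopf-monads}), and the left fusion operator of $H \otimes \?$ does identify with $(H \otimes \mu)(\delta \otimes H)$, so invertibility transfers.

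The gap is in the converse. You propose to ``invoke the representability of augmented Hopf monads from Section 5 of \cite{BrLaVi-HMMC}'' to obtain $T \cong T\mb{1} \otimes \?$, and then transport structure. But that representability \emph{is} the substantive content of the theorem --- the introduction of this very paper singles it out as the nontrivial consequence of the augmentation hypothesis --- and everything after it in your sketch is bookkeeping. A self-contained proof must construct the isomorphism $TX \cong T\mb{1} \otimes X$, and this is precisely where the augmentation and the Hopf condition interact: the augmentation $\nt{e}{T}{\Id{\mc{C}}}$ equips every object $Y$ with the trivial $T$-action $e_{Y}$, and one must show that the canonical natural morphism $TY \to T\mb{1} \otimes Y$ assembled from the left fusion operator, the unit $\eta_{Y}$, and this trivial action is invertible because the fusion operator is, and then verify by hand its compatibility with $\mu$, $\eta$, and the opmonoidal constraints. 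Similarly, the existence and $\otimes$-multiplicativity of the half-braiding $\sigma$ on $T\mb{1}$ (so that the transported comultiplication genuinely lands in the lax center and the compatibility $(\mu \otimes \mu)(A \otimes \sigma_{A} \otimes A)(\delta \otimes \delta) = \delta\mu$ holds) requires an actual construction out of the opmonoidal structure; you correctly identify this as the hard verification but supply no argument. Without these two constructions the backward functor is not defined, so as written the equivalence is asserted rather than proved.
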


\subsection{Galois connections induced by functorial relations} \label{sec:galois-connections}

This is slight generalization of the Galois connection induced by a relation
relationship on sets. It is completely elementary and should be
folklore, but the author didn't find any appropriate reference and decided to
include brief but self contained exposition of this.

\begin{defi}\label{def:functorial-rel}
  We call \defTerm{functorial relation} between the categories \(\mc{X}\) and
  \(\mc{Y}\) to a functor \(\fc{R}{\mc{X} \times \mc{Y}}{\mc{2}}\), where
  \(\mc{2} = \{ \mathrm{true} \to \mathrm{false} \}\) is thought as the truth
  values of the relation.
  In other words, for objects \(X\) and \(Y\) in \(\mc{X}\) and \(\mc{Y}\), respectively,
  we say that \(X\) \defTerm{is related to} \(Y\) when \(R(X,Y) = \mathrm{true}\).

  Notice that asserting that \(R\) is a functor simply means that for any pair
  of morphisms, \(\morf{f}{X'}{X}\) and \(\morf{g}{Y'}{}Y\), if \(X\) is related
  to \(Y\) then \(X'\) is related \(Y'\).
\end{defi}

\begin{defi}\label{def:Universal-solution}
  The \(R\)-\defTerm{representant} for an object \(X\) in \(\mc{X}\) is an object, denoted \(R^{\ast} X\), such that for every \(Y\) in \(\mc{Y}\), \(X\) is related to \(Y\) if and only if there is an unique morphism \(Y\to R^{*}X\).
  The \(R\)-representant for a \(Y\) in \(\mc{Y}\), denoted \(R_{\ast}Y\) is analogously defined.
\end{defi}

The next definition generalizes the case of a Galois connection between
partially ordered sets, to adjunctions with a pre-order as its core.
Remember that a pre-order is a category with at most one morphism among any pair
of objects.
\begin{defi}\label{def:Galois-connection}
  A \defTerm{Galois connection}, between the categories \(\mc{X}\) and \(\mc{Y}\), is a contravariant adjunction \(\fc{G}{\mc{Y}}{\mc{X}^{op}} \dashv \fc{F}{\mc{X}^{op}}{\mc{Y}}\) such that
  \(\Hom[\mc{Y}]{Y}{FX} \approx \Hom[\mc{X}]{X}{GY}\) has at most one element, for arbitrary \(X\) and \(Y\).
\end{defi}

\begin{rem}\label{rem:Galois-connection}
  Let \(\fc{G}{\mc{Y}}{\mc{X}^{op}} \dashv \fc{F}{\mc{X}^{op}}{\mc{Y}}\) be a Galois connection.
  \begin{enumerate}[i.]
    \item For any \(X\) in \(\mc{X}\), there is a unique morphism
          \(X \to GF X\). We say that \(X\) is \defTerm{Galois closed} if this is
          an isomorphism. The Galois closed objects in \(\mc{X}\) form a
          pre-order.
    \item Since evaluating \(G\) at \(\morf{}{Y}{FGY}\) gives a morphism
          \(\morf{}{GFGY}{GY}\), for any \(Y\) in \(\mc{Y}\), \(GY\) is Galois
          closed in \(\mc{X}\). Analogously, for any \(X\) in \(\mc{X}\), \(FX\)
          is closed in \(\mc{Y}\).
    \item In conclusion, \(F\) and \(G\) induce an order-reversing equivalence
          between Galois closed objects. This is known as the induced
          \defTerm{Galois correspondance}.
  \end{enumerate}
\end{rem}

Now we are able to define the \defTerm{Galois connection induced by} \(R\).
\begin{prop}\label{prop:Galois-connection}
  Let \(R\) be a functorial relationship between \(X\) and \(Y\).
  \begin{enumerate}[i.]
    \item If every object in \(\mc{X}\) has a representant, then mapping
          \(X \mapsto R^{\ast}X\) defines a functor
          \(\fc{R^{\ast}}{\mc{X}^{op}}{\mc{Y}}\).

          Analogously, when every object in \(\mc{Y}\) has a representant there is \(\fc{R_{\ast}}{\mc{Y}}{\mc{X}^{op}}\).

    \item If both \(R^{\ast}\) and \(R_{\ast}\) are defined, then they form a Galois connection.
  \end{enumerate}
\end{prop}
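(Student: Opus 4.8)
The plan is to reduce the entire statement to two elementary observations: that the representants of Definition~\ref{def:Universal-solution} are pinned down by a universal property, and that \(R\), being a functor into \(\mc{2}\), can only send a morphism \emph{into} \(\mathrm{true}\) when its source is already \(\mathrm{true}\) (the sole non-identity arrow of \(\mc{2}\) runs the other way). My first step is to record the \emph{unit} of the construction: plugging \(Y = R^{\ast}X\) into the universal property shows that \(X\) is related to \(R^{\ast}X\), with \(\iid{R^{\ast}X}\) as the witnessing morphism; dually, \(R_{\ast}Y\) is related to \(Y\). These reflexivity facts are what bootstrap everything below. I will also note, by the standard uniqueness-of-universal-objects argument, that a representant is unique up to a unique isomorphism, so \(R^{\ast}\) and \(R_{\ast}\) are well defined on objects.

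For part (i), I would define \(R^{\ast}\) on morphisms as follows. Given \(\morf{f}{X'}{X}\) in \(\mc{X}\), functoriality of \(R\) applied to \((f, \iid{R^{\ast}X})\) turns the unit relation \(R(X, R^{\ast}X) = \mathrm{true}\) into \(R(X', R^{\ast}X) = \mathrm{true}\), since the resulting arrow points into \(\mathrm{true}\); the universal property of \(R^{\ast}X'\) then furnishes a unique morphism \(R^{\ast}X \to R^{\ast}X'\), which I take to be \(R^{\ast}(f)\). This is contravariant in \(\mc{X}\), i.e. a functor \(\mc{X}^{op} \to \mc{Y}\). Preservation of identities and of composites is automatic: the targets are representants, so the relevant hom-sets (such as \(\Hom[\mc{Y}]{R^{\ast}X}{R^{\ast}X'}\)) are singletons by the universal property, forcing any two parallel morphisms between them to coincide. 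The construction of \(R_{\ast}\colon \mc{Y} \to \mc{X}^{op}\) is identical with the variances exchanged.

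For part (ii), the heart is to identify the two hom-sets of Definition~\ref{def:Galois-connection} with the truth value of the relation. I claim \(\Hom[\mc{Y}]{Y}{R^{\ast}X}\) is a singleton when \(R(X,Y)=\mathrm{true}\) and empty otherwise. The first case is the universal property; for the second, a hypothetical \(\morf{h}{Y}{R^{\ast}X}\) would, by functoriality of \(R\) applied to \((\iid{X}, h)\), produce an arrow \(R(X,Y) \to R(X, R^{\ast}X) = \mathrm{true}\) in \(\mc{2}\), forcing \(R(X,Y) = \mathrm{true}\) --- a contradiction. Thus \(\Hom[\mc{Y}]{Y}{R^{\ast}X}\) is canonically the subsingleton \(R(X,Y)\), and the same computation identifies \(\Hom[\mc{X}]{X}{R_{\ast}Y}\) with this very subsingleton. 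This yields the bijection \(\Hom[\mc{Y}]{Y}{R^{\ast}X} \approx \Hom[\mc{X}]{X}{R_{\ast}Y}\) required for a contravariant adjunction, and the ``at most one element'' clause holds by construction.

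The only point demanding care --- rather than genuine difficulty, since the author rightly calls this elementary --- is bookkeeping of variance: one must keep the unique non-identity arrow of \(\mc{2}\) pointing from \(\mathrm{true}\) to \(\mathrm{false}\), so that functoriality of \(R\) always delivers the implication in the direction needed, and the whole construction is anchored on the reflexivity fact \(R(X, R^{\ast}X) = \mathrm{true}\). Checking naturality of the above bijection in \(X\) and \(Y\) is then free, because every hom-set in sight is a subsingleton, so any diagram of functions among them commutes automatically. With the bijection natural and valued in subsingletons, \(R^{\ast}\) and \(R_{\ast}\) assemble into the Galois connection of Definition~\ref{def:Galois-connection}.
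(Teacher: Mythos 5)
Your proof is correct and follows essentially the same route as the paper's: part (i) is the transport of the reflexivity fact \(R(X,R^{\ast}X)=\mathrm{true}\) along \(f\) via functoriality of \(R\), and part (ii) is the identification of both hom-sets \(\Hom[\mc{Y}]{Y}{R^{\ast}X}\) and \(\Hom[\mc{X}]{X}{R_{\ast}Y}\) with the subsingleton truth value \(R(X,Y)\). You simply spell out details the paper leaves implicit (the direction of the arrow in \(\mc{2}\), emptiness of the hom-sets in the unrelated case, and the automatic functoriality and naturality from subsingleton hom-sets).
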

\begin{proof}
  Given a morphism \(\morf{f}{X'}{X}\) in \(\mc{X}\), since \(R\) is functorial and \(X\) is related to \(R^{\ast}X\), then \(X'\) is related to  \(R^{\ast}X\); hence there is an unique
  \(R^{\ast}X \to R^{\ast} X' \).

  The adjunction plainly recalls that the existence of maps \(Y \to R^{\ast}X\) and  \(X \to R_{\ast}Y\) are both equivalent to \(X\) is related to \(Y\).
\end{proof}

\section{Invariants and stabilizers for augmented monads} \label{sec:Inv-Stab}

This section contains the main result of this paper. It states
that in an augmented monad it is possible to define of invariants
and stabilizers, in a way that naturally establish a Galois correspondence
when they exist.

\begin{defi}\label{def:Augment}
  Let \(T = (T, \mu, \eta)\) be a monad over \(\mc{C}\).
  An \defTerm{augmentation} of \(T\) is a monad homomorphism
  \(\nt{e}{T}{\Id{\mc{C}}}\). This is a natural transformation which satisfies
  \(e \cV \eta = \id{\Id{\mc{C}}}\) and \(e \cV \mu = e \cH e\).
  A monad endowed with an augmentation is called augmented monad.
\end{defi}

The intuition for an augmentation, is that the morphism \(\morf{e_{X}}{TX}{X}\) is the trivial action for any \(X\) in \(\mc{C}\); the action which forgets \(T\) and leaves \(X\) fixed.
During this whole section we will work in an augmented monad \(T = (T, \mu, \eta, e)\) over
a category \(\mc{C}\) and denote by \((\eta, F \dashv U, \varepsilon)\) the
Eilenberg-Moore decomposition of \(T\); where \(\fc{U}{\mc{C}^{T}}{\mc{C}}\) is
the forgetful functor from the category of \(T\)-actions, and its left adjoint
\(F\) sends every object in \(C\) to the free action on it. See definition
\ref{def:Eilenberg-Moore} for the details.

\subsection{The fix relation} \label{sec:trivial-actions}

Let \(\mC{Monad}_{\mc{C}}\) be the category of monads over \(\mc{C}\).
For brevity, we denote by \(S_{h}\) a monad homomorphism \(\nt{h}{S}{T}\)
considered as object of \(\mC{Monad}_{\mc{C}} / T\) and by \(V_{\alpha}\) a
natural transformation \(\nt{\alpha}{V}{U}\) from a functor
\(\fc{V}{\mc{C}^T}{\mc{C}}\) considered as object of
\(\mC{Fun} (\mc{C}^T , \mc{C}) / U\).

Since the augmentation provides us with a notion of trivial action, we can
compare any action with the trivial one over the same object. Moreover, we can
restrict this comparison to a part of the monad or to a part of the object. The
next definition does both restrictions simultaneously and then performs the
comparison.

\begin{defi} \label{def:fix}
  We say that \(S_{h}\) \defTerm{fixes} \(V_{\alpha}\)  when
  \(  (\id{U} \varepsilon) \cV (h \cH \alpha) = (e \id{U}) \cV (h \cH \alpha).\)
\end{defi}

Let's expand the above definition for a given \(T\)-action
\(M = (X,\morf{r_{M}}{TX}{X})\):

Since \(X = U M\) and the action morphism \(r_{M}\) equals
\(U \varepsilon_M\), then
the left hand side is obtained
by composing \(r_{M}\) with
\((h \cH \alpha)_{M} = (T\alpha_{M}) h_{VM} = h_{X} (S\alpha_{M})\).

On the other hand, \((e \id{U})_{M} = e_{X}\) is the trivial action;
therefore, when \(S_h\) fixes \(V_\alpha\) the restriction of
the trivial action equals the restricted action, this is
\[r_{M} h_{X} (S\alpha_{M}) = e_{X} h_{X} (S\alpha_{M}).\]

\begin{rem}\label{rem:Notation-monad-hom}
  By Corollary \ref{cor:Monad-hom-funtors}, every monad homomorphism
  \(\nt{h}{S}{T}\) corresponds to a functor \(\fc{H}{\mc{C}^{T}}{\mc{C}^{S}}\)
  such that \(U^{S} H = U\), where \(\fc{U^{S}}{\mc{C}^{S}}{\mc{C}}\) is the forgetful
  functor from the category of \(S\)-actions.
  In particular, the augmentation \(\nt{e}{T}{\Id{\mc{C}}}\)
  corresponds a functor \(\fc{E}{\mc{C}}{\mc{C}^T}\) which is
  a section of the forgetful functor, i.e. \(U E = \Id{\mc{C}}\).

  Recalling the definitions, any \(X\) in \(\mc{C}\) gets mapped to
  \(EX = (X, \morf{e_{X}}{TX}{X})\), and any \(M = (X, \morf{r_{M}}{TX}{X})\) in \(\mc{C}^{T}\)
  gets mapped to \(HM = (X, \morf{r_{M}h_{X}}{SX}{X})\).
\end{rem}

The next lemma allow us to reinterpret the definition of acting trivially in terms of the functors defined in the above remark.
\begin{lem}\label{lem:Fix-monadic-functors}
  The homomorphism \(\nt{h}{S}{T}\) fixes \(\nt{\alpha}{V}{U}\) if and only if there is a natural transformation
  \(\nt{\hat{\alpha}}{HEV}{H}\) such that \(U^{S} \hat{\alpha} = \alpha\).
\end{lem}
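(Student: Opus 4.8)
The plan is to exploit that a natural transformation landing in a functor to an Eilenberg--Moore category is pinned down by its image under the corresponding forgetful functor. Since \(\fc{U^{S}}{\mc{C}^{S}}{\mc{C}}\) is faithful, any \(\nt{\hat{\alpha}}{HEV}{H}\) with \(U^{S}\hat{\alpha} = \alpha\) must have, at each \(T\)-action \(M = (X, \morf{r_{M}}{TX}{X})\), the component \(\hat{\alpha}_{M}\) given by the single morphism \(\morf{\alpha_{M}}{VM}{X}\) regarded as a morphism in \(\mc{C}^{S}\) (in particular such a \(\hat\alpha\) is unique). Hence producing \(\hat{\alpha}\) amounts to two verifications: first, that each \(\alpha_{M}\) underlies an \(S\)-morphism \(HEVM \to HM\); and second, that the resulting family is natural. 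I will show the first is exactly the fixing condition, and that the second is automatic.

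For the first point I would unwind the two \(S\)-actions using the explicit formulas of Remark~\ref{rem:Notation-monad-hom}. On the one hand \(HM = (X, r_{M} h_{X})\); on the other hand \(EVM = (VM, e_{VM})\), so \(HEVM = (VM, e_{VM} h_{VM})\). Thus \(\alpha_{M}\) is an \(S\)-morphism precisely when
\[\alpha_{M}\, e_{VM} h_{VM} = r_{M} h_{X}\, (S\alpha_{M}).\]
Now I simplify the left-hand side: naturality of the augmentation \(\nt{e}{T}{\Id{\mc{C}}}\) at \(\morf{\alpha_{M}}{VM}{X}\) gives \(\alpha_{M} e_{VM} = e_{X}\, (T\alpha_{M})\), and naturality of \(\nt{h}{S}{T}\) at the same morphism gives \((T\alpha_{M}) h_{VM} = h_{X}\, (S\alpha_{M})\). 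Chaining these, the left-hand side becomes \(e_{X} h_{X}\, (S\alpha_{M})\), so the \(S\)-morphism condition reads
\[r_{M} h_{X}\, (S\alpha_{M}) = e_{X} h_{X}\, (S\alpha_{M}).\]
This is precisely the component at \(M\) of the fixing condition of Definition~\ref{def:fix}, using \(r_{M} = (\id{U}\varepsilon)_{M}\), \(e_{X} = (e\id{U})_{M}\) and \((h \cH \alpha)_{M} = h_{X}(S\alpha_{M})\). Since the fixing equation is an equality of natural transformations \(SV \Rightarrow U\), it holds if and only if it holds at every \(M\); so \(S_{h}\) fixes \(V_{\alpha}\) if and only if every \(\alpha_{M}\) is \(S\)-equivariant.

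Finally, for the second point, naturality of \(\hat{\alpha}\) is inherited from that of \(\alpha\): for a \(T\)-morphism \(\morf{f}{M}{M'}\), applying the faithful \(U^{S}\) to the candidate naturality square for \(\hat{\alpha}\) returns the square \((Uf)\,\alpha_{M} = \alpha_{M'}\,(Vf)\), which commutes because \(\alpha\) is natural; faithfulness then lifts commutativity back to \(\mc{C}^{S}\). Assembling the two points gives both implications: if \(\hat{\alpha}\) exists then each component is equivariant, whence \(S_{h}\) fixes \(V_{\alpha}\); conversely, if \(S_{h}\) fixes \(V_{\alpha}\), the morphisms \(\alpha_{M}\) define \(S\)-morphisms whose naturality is automatic, yielding the desired \(\hat{\alpha}\) with \(U^{S}\hat{\alpha} = \alpha\). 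I expect no genuine obstacle here beyond bookkeeping: the only point requiring care is applying the naturality of \(e\) and \(h\) in the correct order when collapsing the left-hand side, and keeping the whiskering in \(h \cH \alpha\) straight so that the component computation matches Definition~\ref{def:fix} on the nose.
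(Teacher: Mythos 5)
Your proof is correct, but it takes a more elementary route than the paper. The paper's proof is a one-line application of Proposition~\ref{prop:lifts_nts}: it observes that \(H\) and \(HEV\) are lifts of \(U\) and \(V\) along the monads \(\Id{\mc{C}^{T}}\) and \(S\), with lifting data \((\id{U}\varepsilon)\cV(h\id{U})\) and \((e \cV h)\id{V}\) respectively, and then reads off that the compatibility condition of that proposition is literally the fixing equation. You instead bypass the lifting machinery and verify everything by hand: faithfulness of \(U^{S}\) forces \(\hat{\alpha}_{M}\) to be \(\alpha_{M}\), the \(S\)-equivariance of \(\alpha_{M}\) between \(HEVM = (VM, e_{VM}h_{VM})\) and \(HM = (X, r_{M}h_{X})\) collapses---via naturality of \(e\) and \(h\) at \(\alpha_{M}\)---to exactly the componentwise fixing condition \(r_{M}h_{X}(S\alpha_{M}) = e_{X}h_{X}(S\alpha_{M})\), and naturality of \(\hat{\alpha}\) is reflected from that of \(\alpha\) by faithfulness. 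Both arguments are sound; yours is self-contained and makes the role of the two naturality squares explicit (which is essentially re-proving the relevant instance of Proposition~\ref{prop:lifts_nts}), while the paper's is shorter and keeps the statement aligned with the lifting-data formalism that is reused elsewhere (e.g.\ in Lemma~\ref{lem:Fix-adjoint}). The only cosmetic caveat is that your uniqueness observation for \(\hat{\alpha}\) is a pleasant bonus not required by the statement.
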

\begin{proof}
  First notice that \(H\) and \(H E V\) are a lifts of \(U\) and \(V\), respectively,
  along the monads \(\Id{\mc{C}^{T}}\) and \(S\).
 \[
\begin{tikzcd}
  {\mc{C}^T} && {\mc{C}^S} \\
  \\
  {\mc{C}^T} && \mc{C}
  \arrow[""{name=0, anchor=center, inner sep=0}, "U"', curve={height=8pt}, from=3-1, to=3-3]
  \arrow["{U^{S}}", from=1-3, to=3-3]
  \arrow[""{name=1, anchor=center, inner sep=0}, "V", shift left=2, curve={height=-6pt}, from=3-1, to=3-3]
  \arrow[""{name=2, anchor=center, inner sep=0}, "H"', curve={height=6pt}, from=1-1, to=1-3]
  \arrow["{\Id{\mc{C}^{T}}}",tail reversed, from=1-1, to=3-1]
  \arrow[""{name=3, anchor=center, inner sep=0}, "HEV", shift left=2, curve={height=-6pt}, from=1-1, to=1-3]
  \arrow["\alpha", shorten <=2pt, shorten >=2pt, Rightarrow, from=1, to=0]
  \arrow["\bar{\alpha}", shorten <=2pt, shorten >=2pt, Rightarrow, from=3, to=2]
\end{tikzcd}
\]

The lifting data for \(H\) over \(U\) is \((\id{U} \varepsilon) \cV (h \id{U}) \)
and for \(HEV\) over \(V\) is \((e \cV h) \id{V}\).

As an application of Proposition \ref{prop:lifts_nts}, there is a lift \(\bar{\alpha}\) if and only if
\(\alpha \cV((e \cV h) \id{V}) = (e \id{U}) \cV (h \cH \alpha) \)
equals
\((\id{U} \varepsilon) \cV (h \id{U}) \cV (\id{S} \alpha ) = (\id{U} \varepsilon)\cV (h \cH \alpha)\);
which is the definition of \(S_h\) fixes \(V_\alpha\).
\end{proof}

Now we will check that the fix relationship is preserved
whenever we restrict further.
In other words that fixing defines a functorial relation
between \(\mC{Monad}_{\mc{C}} / T\) and \(\mC{Fun} (\mc{C}^T , \mc{C}) / U\).

Afterwards, we will apply Definition \ref{def:Universal-solution} and Proposition \ref{prop:Galois-connection},
to this relation, in order to define a Galois connection.

\begin{prop}\label{prop:fix-functorial}
  Let \(\nt{g}{S'}{S}\) be any monad homomorphism and
  \(\nt{\beta}{V'}{V}\) be any natural transformation.
  If \(\nt{h}{S}{T}\) fixes \(\nt{\alpha}{V}{U}\), then \(\nt{h \cV g}{S'}{T}\) fixes \(\nt{\alpha \cV \beta}{V'}{U}\).
\end{prop}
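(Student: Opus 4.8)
The plan is to observe that the \emph{fix} condition of Definition~\ref{def:fix} only involves the horizontal composite \(h \cH \alpha\), and that restricting along \(g\) and \(\beta\) amounts to whiskering on the source side by the horizontal composite \(g \cH \beta\). The entire argument will then reduce to the interchange law relating horizontal and vertical composition of natural transformations. Before that, I would note that \(h \cV g\) is again a monad homomorphism \(S' \to T\) and \(\alpha \cV \beta\) a natural transformation \(V' \to U\), so that \(S'_{h\cV g}\) and \(V'_{\alpha \cV \beta}\) are genuine objects of the respective slice categories and the statement is exactly the assertion that \(g\) and \(\beta\), as morphisms in those slices, preserve the fix relation.

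First I would write out the condition that \(\nt{h \cV g}{S'}{T}\) fixes \(\nt{\alpha \cV \beta}{V'}{U}\), namely
\[(\id{U} \varepsilon) \cV \bigl((h \cV g) \cH (\alpha \cV \beta)\bigr) = (e \id{U}) \cV \bigl((h \cV g) \cH (\alpha \cV \beta)\bigr).\]
The key computation is the interchange law, which gives
\[(h \cV g) \cH (\alpha \cV \beta) = (h \cH \alpha) \cV (g \cH \beta),\]
where \(h \cH \alpha\) has type \(SV \Rightarrow TU\), \(g \cH \beta\) has type \(S'V' \Rightarrow SV\), and the two compose vertically to \(S'V' \Rightarrow TU\). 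This isolates the single factor \(g \cH \beta\) on the right of both sides of the desired equation.

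With this rewriting in hand, the left-hand side of the target equation becomes \((\id{U}\varepsilon) \cV (h \cH \alpha) \cV (g \cH \beta)\). Now I apply the hypothesis that \(S_h\) fixes \(V_\alpha\), that is \((\id{U}\varepsilon)\cV(h \cH \alpha) = (e \id{U})\cV(h \cH \alpha)\), to the two leftmost factors. Substituting yields \((e \id{U}) \cV (h \cH \alpha) \cV (g \cH \beta)\), and applying the interchange law in the reverse direction recombines this into \((e \id{U}) \cV \bigl((h \cV g) \cH (\alpha \cV \beta)\bigr)\), which is precisely the right-hand side. This establishes that \(S'_{h \cV g}\) fixes \(V'_{\alpha \cV \beta}\), as required.

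I expect no genuine obstacle: the content of the proposition is exactly that fixing is stable under precomposition, and this stability is built into the bilinearity of horizontal composition over vertical composition. The only point demanding care is bookkeeping, namely checking that all composites have matching (co)domains so the interchange law applies verbatim, and respecting the right-to-left convention so that \(g \cH \beta\) is genuinely whiskered on the source side rather than on the target side.
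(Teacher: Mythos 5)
Your proof is correct and follows exactly the paper's argument: the interchange law \((h \cV g) \cH (\alpha \cV \beta) = (h \cH \alpha)\cV(g \cH \beta)\) followed by substitution of the fix hypothesis. You have merely spelled out the details that the paper leaves as ``obvious.''
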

\begin{proof}
  This is obvious from the facts that \((h \cV g) \cH (\alpha \cV \beta) = (h \cH \alpha)\cV(g \cH \beta)\) and the definition of fixing
  \((\id{U} \varepsilon) \cV (h \cH \alpha) = (e\id{U}) \cV (h \cH \alpha)\).
\end{proof}

\begin{defi}\label{def:Inv-Stab}
  Let \(\nt{h}{S}{T}\) be a monad homomorphism and \(\nt{\alpha}{V}{U}\) a natural transformation,
  where \(\fc{U}{\mc{C}^{T}}{\mc{C}}\) is the forgetful functor.
  \begin{enumerate}[(a).]
    \item We say that \(h\) is \defTerm{the stabilizer of} \(\alpha\) (or \(S\)
          is the stabilizer of \(V\)) if: it fixes \(\alpha\) and for every
          monad homomorphism \(\nt{h'}{S'}{T}\) that also fixes \(\alpha\) there
          is an unique \(\nt{g}{S'}{S}\) such that \(h'= h \cV g\).
    \item We say that \(\alpha\) is the \(h\)-\defTerm{invariant inclusion} (or
          \(V\) is the functor of \(S\)-invariants) if: it is fixed by \(h\) and
          for every functor \(\fc{\alpha'}{V'}{U}\) also fixed by \(h\) there is
          an unique \(\fc{\beta}{V'}{V}\) such that
          \(\alpha' = \alpha \cV \beta\).
  \end{enumerate}
\end{defi}

The next corollary is an straightforward application of Proposition \ref{prop:Galois-connection}, to the fixing relationship.
\begin{cor}\label{cor:Galois-corr}
  Let \(\mc{M}\) and \(\mc{N}\) be subcategories of \(\mC{Monad} (\mc{C}) / T\)
  and \(\mC{Fun} (\mc{C}^T , \mc{C}) / U\), respectively.
  \begin{enumerate}[i.]
    \item If every \(h\) in \(\mc{M}\) has an \(h\)-invariant inclusion in
          \(\mc{N}\), then the mapping \(h \mapsto \mFc{Inv} h\) defines a
          functor \(\fc{\mFc{Inv}}{\mc{M}^{op}}{\mc{N}}\). Analogously, If every
          \(\alpha\) in \(\mc{N}\) has an stabilizer in \(\mc{M}\), then the
          mapping \(\alpha \mapsto \mFc{Stab} \alpha\) defines a functor
          \(\fc{\mFc{Stab}}{\mc{N}}{\mc{M}^{op}}\).

  \item If both $\mFc{Stab}$ and $\mFc{Inv}$ are defined, then they form a Galois connection.
  \end{enumerate}
\end{cor}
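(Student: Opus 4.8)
The plan is to recognize the statement as nothing more than Proposition~\ref{prop:Galois-connection} applied to the fix relation, after restricting it to the subcategories $\mc{M}$ and $\mc{N}$. First I would record that, by Proposition~\ref{prop:fix-functorial}, fixing is a functorial relation $R$ between $\mC{Monad}(\mc{C})/T$ and $\mC{Fun}(\mc{C}^{T},\mc{C})/U$ in the sense of Definition~\ref{def:functorial-rel}: a morphism $S'_{h'}\to S_{h}$ is a monad homomorphism $\nt{g}{S'}{S}$ with $h'=h\cV g$, a morphism $V'_{\alpha'}\to V_{\alpha}$ is a natural transformation $\nt{\beta}{V'}{V}$ with $\alpha'=\alpha\cV\beta$, and Proposition~\ref{prop:fix-functorial} says exactly that relatedness is preserved along such pairs of morphisms. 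Precomposing $R$ with the product of the inclusions $\mc{M}\hookrightarrow\mC{Monad}(\mc{C})/T$ and $\mc{N}\hookrightarrow\mC{Fun}(\mc{C}^{T},\mc{C})/U$ again yields a functor to $\mc{2}$, so $R$ restricts to a functorial relation between $\mc{M}$ and $\mc{N}$.

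Next I would match the two notions of universal object. Comparing Definition~\ref{def:Inv-Stab}(b) with Definition~\ref{def:Universal-solution}, the $h$-invariant inclusion is precisely the $R$-representant $R^{\ast}(S_{h})$: writing $\mFc{Inv} h = V_{\alpha}$, a morphism $V'_{\alpha'}\to V_{\alpha}$ in the slice over $U$ is a natural transformation $\nt{\beta}{V'}{V}$ with $\alpha'=\alpha\cV\beta$, and the universal property asserts exactly that such a $\beta$ exists and is unique whenever $h$ fixes $\alpha'$. Dually, Definition~\ref{def:Inv-Stab}(a) identifies the stabilizer with the representant $R_{\ast}(V_{\alpha})$. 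The two hypotheses of the corollary — every $h$ in $\mc{M}$ having an invariant inclusion in $\mc{N}$, and every $\alpha$ in $\mc{N}$ having a stabilizer in $\mc{M}$ — are therefore literally the existence assumptions of Proposition~\ref{prop:Galois-connection}; so part (i) is its first conclusion and part (ii) its second, with $\mFc{Inv}=R^{\ast}$ playing the role of $F$ and $\mFc{Stab}=R_{\ast}$ the role of $G$ in Definition~\ref{def:Galois-connection}.

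The one point that deserves care — what I would regard as the only content beyond bookkeeping — is the at-most-one-morphism clause of Definition~\ref{def:Galois-connection}, together with the compatibility of the restriction. For the former I would argue that if $h$ does \emph{not} fix $\alpha'$ then there can be no morphism $V'_{\alpha'}\to\mFc{Inv} h$ at all: since $\mFc{Inv} h$ is fixed by $h$, such a morphism would, by the functoriality of $R$ (Proposition~\ref{prop:fix-functorial} with $g=\id{S}$), force $S_{h}$ to be related to $V'_{\alpha'}$, i.e. $h$ to fix $\alpha'$. Hence every relevant hom-set has at most one element and the core of the adjunction is a pre-order, exactly as required. For the latter, because the invariant inclusion and stabilizer are characterized by universal properties over the ambient slice categories, I would check that a representant lying in $\mc{N}$ remains universal with respect to the objects of $\mc{N}$ and that the induced comparison maps stay inside $\mc{N}$; this is immediate when $\mc{M}$ and $\mc{N}$ are full, and is the only place where the passage from the ambient categories to the chosen subcategories has to be justified rather than merely inherited from Proposition~\ref{prop:Galois-connection}.
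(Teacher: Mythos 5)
Your proposal is correct and follows essentially the same route as the paper, which simply observes that the corollary is Proposition~\ref{prop:Galois-connection} applied to the fix relation, made functorial by Proposition~\ref{prop:fix-functorial} and with Definition~\ref{def:Inv-Stab} matching Definition~\ref{def:Universal-solution}. Your extra care about the at-most-one-morphism clause and about the restriction to the subcategories $\mc{M}$ and $\mc{N}$ (where fullness is what makes the inherited universal properties go through) is sound and in fact more explicit than the paper, which leaves these points implicit.
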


\section{Computing invariants via right adjoints} \label{sec:Invariants-as-adjoints}

Now that we have an abstract definition of invariants and stabilizers, we face
the concrete --- or slightly less abstract --- questions of determining when
they exist and how can we compute it. This segment offers an answer for the case
of invariants, by linking them to right adjoints.
Remember that \((T, \mu, \eta, e)\) is an augmented monad over \(\mc{C}\).

First we observe that when the domain monad has a right adjoint we compute
invariants as an equalizer.
\begin{lem} \label{lem:inv-monad-adjoint}
  Let $\nt{h}{S}{T}$ be a monad homomorphism and assume that the monad
  \(\fc{S}{\mc{C}}{\mc{C}}\) has a right adjoint,
  \((\theta, S \dashv R, \upsilon)\).

  For any \(\nt{\alpha}{V}{U}\), \(S_h\) fixes \(V_\alpha\) if and only if for
  each \(M=(X,r_{M})\) in \(\mc{C}^{T}\),
  \((R(eh)_{X}) (\theta_{X}) \alpha_{M} = (R(r_{M}h_{X}))(\theta_{X}) \alpha_{M} \).

  In consequence, when the pair \((R(eh)_{X}) (\theta_{X})\) and
  \((R(r_{M}h_{X}))(\theta_{X})\) has an equalizer for every \(M\), this
  equalizer defines the functor of \(S_h\)-invariants.
\end{lem}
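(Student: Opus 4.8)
The plan is to prove the equivalence by transposing both sides of the fixing identity across the adjunction \(S \dashv R\), and then to recognise the resulting condition as an equalizer computed in a functor category.

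First I would recall that, expanded at a \(T\)-action \(M = (X, r_M)\), the assertion that \(S_h\) fixes \(V_\alpha\) reads \(r_M h_X (S\alpha_M) = e_X h_X (S\alpha_M)\), an equality of two morphisms \(S(VM) \to X\). Since \(S \dashv R\), the transposition \(\phi \mapsto R\phi \circ \theta_{VM}\) is a bijection \(\Hom{S(VM)}{X} \to \Hom{VM}{RX}\), so the two morphisms agree if and only if their transposes do. Applying \(R\) and precomposing with \(\theta_{VM}\), then using \(R(r_M h_X (S\alpha_M)) = R(r_M h_X)\, R(S\alpha_M)\) together with the naturality square of \(\theta\) at \(\alpha_M\) (which gives \(R(S\alpha_M) \circ \theta_{VM} = \theta_X \circ \alpha_M\)), the transpose of the left-hand side becomes \((R(r_M h_X))(\theta_X)\alpha_M\); the same computation with \(e_X h_X\) in place of \(r_M h_X\) handles the right-hand side. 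Quantifying over all \(M\) yields the stated equivalence.

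For the consequence, the cleanest route is to observe that the two families \(p_M := (R(eh)_X)(\theta_X)\) and \(q_M := (R(r_M h_X))(\theta_X)\), each a morphism \(X \to RX\), are the components of two natural transformations \(\nt{P,Q}{U}{RU}\) between functors \(\mc{C}^T \to \mc{C}\) (here \(RU\) denotes the functor \(M \mapsto R(UM)\)). Naturality of \(P\) uses only the naturality of \(\theta\) and of \(e \cV h\); naturality of \(Q\) additionally uses that a morphism \(\fc{f}{M}{N}\) in \(\mc{C}^T\) is a \(T\)-morphism, i.e. \((Uf)\, r_M = r_N\, T(Uf)\) --- this is the single place where the action structure genuinely enters, and I expect it to be the main point requiring care. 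With \(P\) and \(Q\) available, the equivalence just proved says exactly that \(S_h\) fixes \(V_\alpha\) if and only if \(P \cV \alpha = Q \cV \alpha\).

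Finally I would identify the functor of \(S_h\)-invariants as the equalizer of \(P\) and \(Q\) in \(\mC{Fun} (\mc{C}^T , \mc{C})\). The hypothesis that each pair \((p_M, q_M)\) has an equalizer \(\morf{\iota_M}{W_M}{X}\) means this equalizer exists and is computed pointwise: set \(VM = W_M\) and \(\alpha_M = \iota_M\), define \(V\) on a \(T\)-morphism \(f\) through the universal property (the map \((Uf)\iota_M\) equalizes \((p_N, q_N)\), since \(p_N (Uf) = R(Uf) p_M\) and \(q_N (Uf) = R(Uf) q_M\) by naturality of \(P,Q\), while \(p_M \iota_M = q_M \iota_M\); hence it factors uniquely through \(\iota_N\)), and let functoriality of \(V\) and naturality of \(\alpha\) follow from the uniqueness clause. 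By construction \(P \cV \alpha = Q \cV \alpha\), so \(h\) fixes \(\alpha\); and for any \(\nt{\alpha'}{V'}{U}\) fixed by \(h\) --- equivalently, with \(P \cV \alpha' = Q \cV \alpha'\) --- the equalizer's universal property delivers a unique \(\nt{\beta}{V'}{V}\) with \(\alpha' = \alpha \cV \beta\). These are precisely the two defining conditions of the \(h\)-invariant inclusion in Definition~\ref{def:Inv-Stab}, so \(V_\alpha\) is the functor of \(S_h\)-invariants.
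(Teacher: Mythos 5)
Your proof is correct and follows essentially the same route as the paper's: transpose the fixing identity across \(S \dashv R\) using the unit and naturality of \(\theta\), then recognize the resulting condition as an equality of natural transformations \(U \to RU\) whose pointwise equalizer assembles into the invariants functor. You merely make explicit the details the paper compresses into ``bend \(S\) to \(R\)'' and ``equalizers are functorial'' (in particular the naturality checks for \(P\) and \(Q\) and the verification of the universal property), all of which are sound.
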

\begin{proof}
  Just recall that \(S_h\) fixes \(V_\alpha\) when
  \(r_{M} h_{X} (S\alpha_{M}) = e_{X} h_{X} (S\alpha_{M})\) for every \(M\), and
  bend \(S\) to \(R\) in both sides of the equation.
In other words,
\[(R ((U \varepsilon) \cV (h U))) \cV (\theta U) \cV \alpha = (R(e \cV h)U) \cV (\theta U) \cV \alpha .\]
Since equalizers are funtorial, the last sentence is evident.
\end{proof}

The next example illustrate the above lemma in the case of a \(\otimes\)-representable monad.
\begin{ex}
  Let \(T = A \otimes \?\) for an augmented monoid
  \[A = (A, \morf{m}{A \otimes A}{A}, \morf{u}{\mb{1}}{A}, \morf{\epsilon}{A}{\mb{1}})\]
  in a monoidal category \((\mc{C}, \otimes, \mb{1})\).
  Assume that \(A\) is right closed in \(\mc{C}\)
  and that \(\mc{C}\) has equalizers.
  The functor of \(A\)-invariants \(\mFc{Inv} A := \mFc{Inv} T\)
  sends each action \(M=(X, \morf{r_{M}}{A \otimes X}{X})\)
  to the equalizer of
\(
\begin{tikzcd}[cramped]
  {X} & {\riHom{A}{X}.}
  \arrow["{\curry{r_{M}}}", shift left=1.5, from=1-1, to=1-2]
  \arrow["{\curry{\epsilon \otimes X}}"', shift right=1, from=1-1, to=1-2]
\end{tikzcd}
\)

Moreover, given a monoid homomorphism \(\morf{h}{B}{A}\),
with \(B\) also right closed in \(C\).

  The functor of \(B\)-invariants \(\mFc{Inv} B := \mFc{Inv} (B\otimes \?)_{h\otimes \?}\)
  sends each action to the equalizer of
\(
\begin{tikzcd}[cramped]
  {X} & {\riHom{A}{X}} & {\riHom{B}{X}}.
  \arrow["{\curry{r_{M}}}", shift left=1.5, from=1-1, to=1-2]
  \arrow["{\curry{\epsilon \otimes X}}"', shift right=1, from=1-1, to=1-2]
  \arrow["{\riHom{h}{X}}", from=1-2, to=1-3]
\end{tikzcd}
  \)
\end{ex}

\subsection{Invariants are right adjoints} \label{sec:invariants-are-right}

Right adjoints not only allow us to compute invariants.
In fact, the \(T\)-invariants,
i.e the invariants for \(\nt{\iid{T}}{T}{T}\),
are right adjoints of the trivial action functor \(E\).
Where \(\fc{E}{\mc{C}}{\mc{C}}^{T}\) is the functor corresponding to the
augmentation \(\nt{e}{T}{\Id{\mc{C}}}\), remember that it endows
every object in \(\mc{C}\) with the trivial action given by the augmentation.

\begin{lem}\label{lem:Inv-adjoint-E}
  If \(E\) has a right adjoint functor \(\fc{\Gamma}{\mc{C}^{T}}{\mc{C}}\),
  then \(\Gamma\) is the functor of \(T\)-invariants.
\end{lem}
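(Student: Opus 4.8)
The plan is to display $\Gamma$, together with a canonical natural transformation down to $U$, as the $\iid{T}$-invariant inclusion in the sense of Definition~\ref{def:Inv-Stab}(b), since ``$T$-invariants'' means the invariants for $\nt{\iid{T}}{T}{T}$. Write $\nt{\varepsilon^{\Gamma}}{E\Gamma}{\Id{\mc{C}^{T}}}$ and $\nt{\eta^{\Gamma}}{\Id{\mc{C}}}{\Gamma E}$ for the counit and unit of the adjunction $E \dashv \Gamma$. Since $UE = \Id{\mc{C}}$ by Remark~\ref{rem:Notation-monad-hom}, whiskering the counit by $U$ produces a natural transformation $\nt{U\varepsilon^{\Gamma}}{\Gamma}{U}$, which I denote $\alpha$; this $(\Gamma,\alpha)$ is my candidate invariant inclusion.

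First I would check that $\iid{T}$ fixes $\alpha$. Specializing Lemma~\ref{lem:Fix-monadic-functors} to $h = \iid{T}$ collapses the associated functor $H$ to $\Id{\mc{C}^{T}}$, so the lemma asserts that $\iid{T}$ fixes $\alpha$ exactly when $\alpha$ admits a lift $\nt{\hat{\alpha}}{E\Gamma}{\Id{\mc{C}^{T}}}$ with $U\hat{\alpha} = \alpha$. The counit $\varepsilon^{\Gamma}$ is such a lift by the very definition of $\alpha$, so this step is immediate.

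The substance lies in the universal property. Given any $\nt{\alpha'}{V'}{U}$ that is also fixed by $\iid{T}$, Lemma~\ref{lem:Fix-monadic-functors} produces a lift $\nt{\hat{\alpha'}}{EV'}{\Id{\mc{C}^{T}}}$ with $U\hat{\alpha'} = \alpha'$, and Proposition~\ref{prop:lifts_nts} guarantees that this lift is \emph{unique}. Transposing $\hat{\alpha'}$ across $E \dashv \Gamma$ gives $\nt{\beta := (\Gamma \hat{\alpha'}) \cV (\eta^{\Gamma} V')}{V'}{\Gamma}$, whose inverse transpose recovers $\varepsilon^{\Gamma} \cV (E\beta) = \hat{\alpha'}$. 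Whiskering this identity by $U$ and using $UE\beta = \beta$ (again $UE = \Id{\mc{C}}$) yields $\alpha' = U\hat{\alpha'} = (U\varepsilon^{\Gamma}) \cV \beta = \alpha \cV \beta$, which is the desired factorization.

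For uniqueness, suppose $\nt{\beta'}{V'}{\Gamma}$ also satisfies $\alpha' = \alpha \cV \beta'$. Its transpose $\varepsilon^{\Gamma} \cV (E\beta')$ is again a lift of $\alpha'$, since whiskering it by $U$ returns $(U\varepsilon^{\Gamma}) \cV \beta' = \alpha \cV \beta' = \alpha'$; by the uniqueness of the lift it must coincide with $\hat{\alpha'}$, and since the adjoint transpose is a bijection this forces $\beta' = \beta$. I expect the main (and fairly mild) obstacle to be precisely this last dovetailing: neither the bijectivity of the transpose nor the uniqueness of the lift pins $\beta$ down on its own, but combined they ensure that every factorization of $\alpha'$ through $\alpha$ transposes to the single distinguished lift, hence is unique. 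Granting that, $(\Gamma,\alpha)$ satisfies Definition~\ref{def:Inv-Stab}(b) for $h=\iid{T}$, so $\Gamma$ is the functor of $T$-invariants.
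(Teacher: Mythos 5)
Your proposal is correct and follows essentially the same route as the paper's proof: take $U$ applied to the counit of $E \dashv \Gamma$ as the candidate inclusion, use Lemma~\ref{lem:Fix-monadic-functors} both to verify it is fixed and to lift any other fixed $\alpha'$, then transpose across $E \dashv \Gamma$ to obtain $\beta$. The only cosmetic difference is that you obtain uniqueness of $\beta$ from the uniqueness clause of Proposition~\ref{prop:lifts_nts} combined with bijectivity of the adjoint transpose, whereas the paper invokes faithfulness of $U$ directly --- these amount to the same thing, and your version of the uniqueness argument is, if anything, spelled out more explicitly.
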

\begin{proof}
Take the adjunction \((\zeta, E \dashv \Gamma, \bar{\gamma})\) and define \(\nt{\gamma}{\Gamma}{U}\) as \(\gamma = U \bar{\gamma}\).
  Hence by Lemma \ref{lem:Fix-monadic-functors}, \(\id{T}\) fixes \(\Gamma_\gamma\).
  Moreover by the same lemma, if \(\id{T}\) fixes \(\nt{\alpha}{V}{U}\)
  there is \(\nt{\bar{\alpha}}{EV}{\Id{\mc{C}^{T}}}\) such that \(\alpha = U\bar{\alpha}\).
  We need a \(\nt{\beta}{V}{\Gamma}\) such that \(\alpha = \gamma \cV \beta = (U \bar{\gamma}) \cV (UE \beta)\) since \(U\) is faithful  \(\bar{\alpha} =  \bar{\gamma} \cV (E \beta)\),
  after bending the \(E\)
  we obtain that \((\Gamma \bar{\alpha}) \cV (\zeta V) = \beta\).

  Since \(E = (\bar{\gamma} E)\cV (E \zeta)\) then
  \begin{align*}
    \bar{\alpha} &= \bar{\alpha}\cV (\bar{\gamma} EV)\cV (E \zeta V)\\
                 &= \bar{\gamma}\cV (E \Gamma \bar{\alpha} )\cV (E \zeta V).
  \end{align*}
  Therefore \(\alpha = U \bar{\alpha} = \gamma \cV (\Gamma \bar{\alpha}) \cV (\zeta V)\).
\end{proof}

Right adjoints also allow to change the category where one is calculating.
Let $\nt{h}{S}{T}$ be a monad homomorphism, with correspondent functor
$\fc{H}{\mc{C}^{T}}{\mc{C}^S}$ as in Remark \ref{rem:Notation-monad-hom}; in particular \(U = U^{S} H\).

Notice that \((S, \mu', \eta', eh)\) is also an augmented monad and
instead of computing $h$-invariants in $\mc{C}^T$, we can try to compute
$\id{S}$-invariants in $\mc{C}^S$ and pull them back.
The next two lemmas show that this strategy is successful when \(H\) has a right adjoint.

\begin{lem}\label{lem:Fix-adjoint}
  Let \((\zeta, H \dashv K, \omega)\) be an adjunction
  with \(\fc{H}{\mc{C}^{T}}{\mc{C}^{S}}\),
  coming from a monad homomorphism \(\nt{h}{S}{T}\).
  For any \(\nt{\alpha}{V}{U}\), \(S_h\) fixes \(V_\alpha\) is equivalent to
  \(\id{S}\) fixes \(\nt{\hat{\alpha}}{VK}{U^{S}}\),
  where \(\hat{\alpha} = (U^{S} \omega) \cV (\alpha K)\).
\end{lem}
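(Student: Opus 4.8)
The plan is to route both sides of the equivalence through the lifting criterion of Lemma~\ref{lem:Fix-monadic-functors} and then to ferry the resulting lifts across the adjunction $H \dashv K$. First I would record two bookkeeping facts that make the two applications of that lemma line up. The monad $(S,\mu',\eta',e \cV h)$ is augmented, and by Remark~\ref{rem:Notation-monad-hom} its associated section is $E^{S} = HE$: both functors send an object $X$ to $(X,(e \cV h)_{X}) = (X,e_{X}h_{X})$. Moreover, since $UE = \Id{\mc{C}}$ and $U^{S}H = U$, the composite $U^{S}HEV$ collapses to $V$, which is exactly what is needed for the domains below to match.

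With these in hand I would invoke Lemma~\ref{lem:Fix-monadic-functors} twice. On the $T$-side it says that $S_{h}$ fixes $V_{\alpha}$ if and only if there is a lift $\nt{\bar{\alpha}}{HEV}{H}$ with $\id{U^{S}}\bar{\alpha} = \alpha$. On the $S$-side I would apply the same lemma to the augmented monad $(S,\mu',\eta',e \cV h)$ and the identity homomorphism $\iid{S}$, whose associated functor (by Corollary~\ref{cor:Monad-hom-funtors}) is $\Id{\mc{C}^{S}}$ and whose section is $E^{S} = HE$; it then asserts that $\iid{S}$ fixes $\hat{\alpha}$ if and only if there is a lift $\nt{\overline{\hat{\alpha}}}{HEVK}{\Id{\mc{C}^{S}}}$ with $\id{U^{S}}\overline{\hat{\alpha}} = \hat{\alpha}$. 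The whole statement thus reduces to producing, in each direction, a lift of one kind from a lift of the other.

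The passage between the two kinds of lift is supplied by the adjunction $H \dashv K$. Given $\bar{\alpha}$ I would set $\overline{\hat{\alpha}} = \omega \cV (\bar{\alpha} K)$; applying $U^{S}$ and using $\id{U^{S}}\bar{\alpha} = \alpha$ gives $\id{U^{S}}\overline{\hat{\alpha}} = (U^{S}\omega) \cV (\alpha K) = \hat{\alpha}$ in one line. Conversely, given $\overline{\hat{\alpha}}$ I would set $\bar{\alpha} = (\overline{\hat{\alpha}} H) \cV (HEV\zeta)$. The verification that $\id{U^{S}}\bar{\alpha} = \alpha$ is the one real computation: applying $U^{S}$ and using $U^{S}HEV = V$ reduces the expression to $(\hat{\alpha} H) \cV (V\zeta)$; then the interchange law rewrites $(\alpha KH) \cV (V\zeta)$ as $(U\zeta) \cV \alpha$, and finally $U = U^{S}H$ together with the triangle identity $(\omega H) \cV (H\zeta) = \iid{H}$ collapses everything to $\alpha$.

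I expect that last collapse to be the main obstacle, not because it is deep but because it demands careful tracking of the whiskerings and of exactly which triangle identity is being used; the forward direction, by contrast, is immediate. Since Lemma~\ref{lem:Fix-monadic-functors} (via Proposition~\ref{prop:lifts_nts}) guarantees that the lifts involved are unique, the two constructions are in fact mutually inverse, which is more than required: either implication on its own already delivers the claimed equivalence.
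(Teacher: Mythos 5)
Your proposal is correct and follows essentially the same route as the paper: both directions are routed through Lemma~\ref{lem:Fix-monadic-functors}, with the lift transported across $H \dashv K$ via $\overline{\hat{\alpha}} = \omega \cV (\bar{\alpha}K)$ one way and $\bar{\alpha} = (\overline{\hat{\alpha}}H) \cV (HEV\zeta)$ the other. The only difference is that you make explicit two points the paper leaves implicit --- that the section for the augmented monad $(S,\mu',\eta',e \cV h)$ is $E^{S}=HE$, and the interchange-law/triangle-identity computation behind the paper's phrase ``bending $K$ to $H$'' --- which is a welcome addition rather than a deviation.
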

\begin{proof}
  We will use Lemma \ref{lem:Fix-monadic-functors}.
  If \(S_h\) fixes \(V_\alpha\), there is \(\nt{\bar{\alpha}}{HEV}{H}\) such that \(\alpha = U^{S}\bar{\alpha}\).
  Therefore
  \[\hat{\alpha} = (U^{S} \omega) \cV (\alpha K)  = U^{S}(\omega \cV (\bar{\alpha} K)).\]
  Hence, by the same lemma \(\id{S}\) fixes \((VK)_{\hat{\alpha}}\).

  For the reciprocal, notice that \(\alpha = (\hat{\alpha} H)\cV (V \zeta)\) by bending \(K\) to \(H\).
  Now the proof is completely analogous:

  If \(\id{S}\) fixes \(\hat{\alpha}\), there is \(\nt{\tilde{\alpha}}{HEVK}{\Id{\mc{C}^{S}}}\)
  such that \(\hat{\alpha} = U^{S} \tilde{\alpha}\).
  Therefore
  \[\alpha = (\hat{\alpha} H) \cV (V \zeta)  = U ^{S}((\tilde{\alpha} H) \cV (HEV \zeta)),\]
  since \(U^{S} H E = UE = \Id{\mc{C}}\).
  The same lemma tells us that \(S_h\) fixes \(V_\alpha\).
\end{proof}

\begin{cor}\label{cor:Inv-adjoint-change}
  Let \(\nt{h}{S}{T}\) be a monad homomorphism and suppose that
  \(\fc{H}{\mc{C}^{T}}{\mc{C}^{S}}\) has a right adjoint. If
  \(\nt{\gamma^{S}}{\Gamma^{S}}{U^{S}}\) is the  $\id{S}$-invariant inclusion, then
  \(\nt{\gamma^{S} \id{H}}{\Gamma^{S} H}{U}\) is the $h$-invariant inclusion.
\end{cor}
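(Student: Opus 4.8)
The plan is to verify the two defining conditions of Definition~\ref{def:Inv-Stab}(b) for the candidate $\nt{\gamma^{S}H}{\Gamma^{S}H}{U}$ (recall $U = U^{S}H$), transporting everything through the adjunction $(\zeta, H \dashv K, \omega)$ whose existence is the hypothesis, exactly as in Lemma~\ref{lem:Fix-adjoint}. The organizing device is the transpose bijection attached to $H \dashv K$: for functors $\fc{W}{\mc{C}^{S}}{\mc{C}}$ and $\fc{V'}{\mc{C}^{T}}{\mc{C}}$, send $\nt{\beta'}{V'}{WH}$ to $\Psi_{W}(\beta') = (W\omega) \cV (\beta' K)$, with inverse $\rho \mapsto (\rho H) \cV (V' \zeta)$; that these are mutually inverse is the pair of triangle identities, already invoked in the proof of Lemma~\ref{lem:Fix-adjoint}. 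Two features of $\Psi$ carry the argument. First, $\Psi_{U^{S}}$ is precisely the operation $\alpha \mapsto \hat{\alpha}$ of Lemma~\ref{lem:Fix-adjoint}, so that lemma reads: $S_{h}$ fixes $V'_{\alpha'}$ if and only if $\id{S}$ fixes $(V'K)_{\Psi_{U^{S}}(\alpha')}$. Second, $\Psi$ is natural in $W$, i.e. $\Psi_{W_{2}}((\phi H) \cV \beta') = \phi \cV \Psi_{W_{1}}(\beta')$ for every $\nt{\phi}{W_{1}}{W_{2}}$, which is one instance of the interchange law for $\phi$ and $\omega$.

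First I would check the fixedness condition. Instantiating the naturality formula at $\phi = \gamma^{S}$ and $\beta' = \iid{\Gamma^{S}H}$ yields $\Psi_{U^{S}}(\gamma^{S}H) = \gamma^{S} \cV (\Gamma^{S}\omega)$. Because $\gamma^{S}$ is the $\id{S}$-invariant inclusion it is fixed by $\id{S}$, so Proposition~\ref{prop:fix-functorial}, applied inside the augmented monad $(S, eh)$ with $g = \iid{S}$ and with the natural transformation $\Gamma^{S}\omega$, shows that $\id{S}$ fixes $\gamma^{S} \cV (\Gamma^{S}\omega) = \Psi_{U^{S}}(\gamma^{S}H)$. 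By Lemma~\ref{lem:Fix-adjoint} this is equivalent to $S_{h}$ fixing $(\Gamma^{S}H)_{\gamma^{S}H}$, which is the first condition.

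Next comes the universal property. Let $\nt{\alpha'}{V'}{U}$ be any natural transformation fixed by $h$; by Lemma~\ref{lem:Fix-adjoint} its transpose $\Psi_{U^{S}}(\alpha')$ is fixed by $\id{S}$. Naturality of $\Psi$ in $W$ at $\phi = \gamma^{S}$ gives $\Psi_{U^{S}}((\gamma^{S}H) \cV \beta') = \gamma^{S} \cV \Psi_{\Gamma^{S}}(\beta')$ for every $\nt{\beta'}{V'}{\Gamma^{S}H}$. Since $\Psi_{\Gamma^{S}}$ is a bijection and $\Psi_{U^{S}}$ is injective, the assignment $\beta' \mapsto \Psi_{\Gamma^{S}}(\beta')$ restricts to a bijection between the set of factorizations $\alpha' = (\gamma^{S}H) \cV \beta'$ and the set of factorizations $\Psi_{U^{S}}(\alpha') = \gamma^{S} \cV \delta$. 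The defining universal property of the $\id{S}$-invariant inclusion $\gamma^{S}$ makes the latter set a singleton; hence so is the former. That is exactly the assertion that $\gamma^{S}H$ is the $h$-invariant inclusion, completing the verification.

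The only genuine work, and therefore the main obstacle, is the bookkeeping around $\Psi$: that it is a bijection inverse to the construction used in Lemma~\ref{lem:Fix-adjoint}, that it agrees with $\hat{(-)}$ at $W = U^{S}$, and that it is natural in $W$. All three are formal consequences of the triangle and interchange identities, so no new computation is required; once they are recorded, both defining conditions reduce to transporting a uniqueness-of-factorization statement across a bijection. I would take care only to keep the whiskerings and the right-to-left composition convention straight, since that is where slips are most likely.
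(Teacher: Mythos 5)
Your proof is correct and follows essentially the same route as the paper's: transport the fix relation and the factorization across the adjunction \(H \dashv K\) via Lemma~\ref{lem:Fix-adjoint} and the adjunction transpose. If anything, your write-up is more complete than the paper's, which only records the existence of the factorization \(\beta\), whereas you also verify explicitly that \(S_h\) fixes \((\Gamma^{S}H)_{\gamma^{S}H}\) and that \(\beta\) is unique (via the bijectivity and naturality of \(\Psi\)).
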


\begin{proof}
  With the notations from the previous lemma,
  \(S_h\) fixes \(V_\alpha\) if and only
  if \(\id{S}\) fixes \(\hat{\alpha}\).
  Since \(\gamma^{S}\) is the \(S\)-invariant inclusion, there is an unique
  \(\nt{\hat{\beta}}{\Gamma^{S}}{VK}\) such that
  \(\hat{\alpha} = \gamma^{S} \cV \hat{\beta}\).
  Remembering that \(\alpha = (\hat{\alpha} H)\cV (V \omega)\) we obtain
  \(\alpha = ((\gamma^{S} \cV \hat{\beta}) H)\cV (V \omega)= (\gamma^{S}H)\cV \beta \)
  where \(\beta = (\hat{\beta} H)\cV (V \omega)\).
\end{proof}

We end this segment with a summary of the computation of invariants
by means of right adjoints.

\begin{cor}
  Let \((T, \mu, \eta, e)\) be an augmented monad,
  and \(\nt{h}{S}{T}\) a monad homomorphism.
  Take \(\fc{E}{\mc{C}}{\mc{C}}^{T}\) and \(\fc{H}{\mc{C}^{S}}{\mc{C}^{T}}\) the functors corresponding to
  \(e\) and \(h\), respectively;
  and assume that they are part of adjunctions
  \((\zeta, E \dashv \Gamma, \bar{\gamma})\) and \((\zeta, H \dashv K, \omega)\).

  Then \(\mFc{Inv} T = \Gamma_{\gamma} \) with \( \gamma = U \bar{\gamma}\),
  \(\mFc{Inv} S = \Gamma K_{\hat{\gamma}}\) with
  \(\hat{\gamma} = (U^{S} \omega) \cV (U \bar{\gamma} K),\)
  and \(\mFc{Inv} S_h =  \Gamma K H _{\hat{\gamma}H}\)
  with \({\hat{\gamma}H} = (U^{S} \omega H) \cV (U \bar{\gamma} KH)\).

\end{cor}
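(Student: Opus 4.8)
The plan is to prove the three asserted formulas one at a time, each reducing to a result already established in this section, so that no genuinely new argument is needed. The first identity, $\mFc{Inv} T = \Gamma_{\gamma}$ with $\gamma = U\bar{\gamma}$, is exactly the statement of Lemma~\ref{lem:Inv-adjoint-E} together with the choice of inclusion made in its proof; I would simply invoke that lemma.

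For the second identity I would first recall that $(S,\mu',\eta',e\cV h)$ is again an augmented monad over $\mc{C}$ (as noted in the text preceding Lemma~\ref{lem:Fix-adjoint}), so that $\mFc{Inv} S$ is well defined and computable by the same machinery applied to this monad. The key bookkeeping step is to identify the functor $\fc{E^{S}}{\mc{C}}{\mc{C}^{S}}$ attached to the augmentation $e\cV h$ with the composite $HE$. Using the explicit descriptions in Remark~\ref{rem:Notation-monad-hom}, one computes $HEX = H(X,e_{X}) = (X,\,e_{X}h_{X}) = (X,(e\cV h)_{X})$, so indeed $E^{S}=HE$. Since $E\dashv\Gamma$ and $H\dashv K$ by hypothesis, the composite left adjoint $HE$ has right adjoint $\Gamma K$, whose counit is the standard composite $\bar{\gamma}^{S} = \omega\cV(H\bar{\gamma}K)$. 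Applying Lemma~\ref{lem:Inv-adjoint-E} to the augmented monad $(S,e\cV h)$ then gives $\mFc{Inv} S = (\Gamma K)_{\hat{\gamma}}$ with $\hat{\gamma} = U^{S}\bar{\gamma}^{S}$; distributing $U^{S}$ over the composite counit and using $U^{S}H = U$ yields $\hat{\gamma} = (U^{S}\omega)\cV(U\bar{\gamma}K)$, as claimed.

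The third identity is then immediate from Corollary~\ref{cor:Inv-adjoint-change}. Its hypotheses hold because $H$ has the right adjoint $K$ and, by the previous paragraph, $\hat{\gamma}$ is precisely the $\id{S}$-invariant inclusion $\Gamma^{S}_{\gamma^{S}}$ with $\Gamma^{S}=\Gamma K$ and $\gamma^{S}=\hat{\gamma}$. The corollary produces the $h$-invariant inclusion by whiskering on the right with $H$, giving $\mFc{Inv} S_{h} = (\Gamma K H)_{\hat{\gamma}H}$, and distributing the whiskering over the vertical composite defining $\hat{\gamma}$ gives $\hat{\gamma}H = (U^{S}\omega H)\cV(U\bar{\gamma}KH)$. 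The only real content lies in the second paragraph — verifying $E^{S}=HE$ and correctly identifying the counit of the composite adjunction — and the main pitfall is purely notational, namely keeping the horizontal and vertical composition and whiskering conventions straight so that the expansion of $U^{S}\bar{\gamma}^{S}$ lands on exactly the stated natural transformation.
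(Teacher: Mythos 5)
Your proposal is correct and follows exactly the paper's own route: Lemma~\ref{lem:Inv-adjoint-E} applied to $E \dashv \Gamma$ and then to the composite adjunction $HE \dashv \Gamma K$ (having identified $HE$ as the trivial-action functor for the augmented monad $(S, e\cV h)$), followed by Corollary~\ref{cor:Inv-adjoint-change} for the third identity. Your explicit verification that $E^{S}=HE$ and your expansion of the composite counit are details the paper leaves implicit, but the argument is the same.
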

\begin{proof}
  This is Lemma \ref{lem:Inv-adjoint-E} applied twice to \(E \dashv \Gamma\) and
  to \(H E \dashv  \Gamma K\), and Corollary \ref{cor:Inv-adjoint-change}.
  Notice that when \(H\) has also a left adjoint \(L\) then
  \( L H E \dashv \Gamma K H \).
\end{proof}

\section{Computing stabilizers in monoidal closed categories} \label{sec:Stab-mono}

Currently we don't know of any procedure for explicitly
describing the stabilizers for augmented monads in
categories without additional structure.
We imagine a description in terms of Kan extensions,
but we have not succeeded in formalizing it.
However, in the case of a \(\otimes\)-representable monad over a
monoidal closed category,
we can use \defTerm{ends} as a tool for the computation of stabilizers.
This is a procedure inpired by Tannakian reconstruction.
encompass augmented Hopf monads in monoidal closed categories.

\subsection{The end enrichement of functor categories} \label{sec:enrichement}

For this section, we assume familiarity with the basic properties of ends, as presented for instance in \cite[Sections IX. 5-8]{MacLaneCategories}.
For fixing notation, we briefly recall the definition.

\begin{defi}
  A \defTerm{wedge} for a functor \(\fc{S}{\mc{D}^{op} \times \mc{D}}{\mc{C}}\) is a pair \((W, \xi_{X})_{X\in \mc{D}}\)
  consisting of an object \(W\) in \(\mc{C}\) and
  a family of morphisms \(\morf{\xi_{X}}{W}{S(X,X)}\) indexed by the objects in \(\mc{C}\)
  such that for any morphism \(\morf{f}{X}{Y}\) in \(\mc{D}\) the diagram
  \[\begin{tikzcd}
  W & {S(X,X)} \\
  {S(Y,Y)} & {S(X,Y)}
  \arrow["{\xi_X}", from=1-1, to=1-2]
  \arrow["{S(\id{X},f)}", from=1-2, to=2-2]
  \arrow["{\xi_Y}"', from=1-1, to=2-1]
  \arrow["{S(f,\id{Y})}"', from=2-1, to=2-2]
\end{tikzcd}
\]
  commutes.

  The \defTerm{end} of \(S\) denoted
  \[\int_{\mc{D}} S =\int_{X \in \mc{D} } S(X,X) = (L, \lambda_{X})_{X \in \mc{D}} \]
  is the universal wedge. This means that for any other wedge, \((W,\xi_{X})_{X \in \mc{D}}\) there is an unique morphism, denoted \(\morf{\int_{X} \xi_{X}}{W}{L}\), such that \(\lambda_{Y}\int_{X} \xi_{X} = \xi_{Y}\) for any \(Y\) in \(\mc{C}\).
\end{defi}

The next definition allow us to view the natural transformations
as an object in the co-domain category.

\begin{defi} \label{def:Internal-Nat}
  Let \(V\) and \(W\) be two functors from an arbitrary category \(\mc{D}\) to
  a monoidal category \((\mc{C}, \otimes, \mb{1})\).
  Assume that \(\mc{C}\) is left closed over \(V\),
  i.e. for every \(D\) in \(\mc{D}\) the functor \(\fc{\? \otimes VD}{\mc{C}}{\mc{C}}\) has a right adjoint \(\liHom{VD}{\?}\).

  The \defTerm{left internal Nat} from \(V\) to \(W\) is defined as the end of
  the functor \(\fc{\liHom{V\?_{1}}{W\?_{2}}}{\mc{\mc{D}^{op}} \times \mc{D}}{\mc{\mc{C}}}\), and denote it by
  \[ \liHom{V}{W} := \int_{\mc{D}} \liHom{V\?_{1}}{W\?_{2}} = \int_{D \in \mc{D}} \liHom{V D}{W D} = (\liHom{V}{W}, \morf{\lambda_{D}}{\liHom{V}{W}}{\liHom{VD}{WD}})_{D \in \mc{D}}.\]
\end{defi}

\begin{rem} \label{rem:Functor-enriched}
  If the internal Nat \(\liHom{V}{W}\) exists for any pair of functors,
  --- for instance when \(\mc{C}\) is complete, left closed and \(\mc{D}\) is small ---
  then \(\mC{Fun} (\mc{D}, \mc{C})\) is a left \(\mc{C}\)-enriched category.
  The next two propositions are consequences of this fact.
\end{rem}

\begin{prop}\label{prop:Nats-bifunctor}
  Let \(V\), \(V'\), \(W\) and \(W'\) be functors from \(\mc{D}\) to \(\mc{C}\)
  with \(\nt{\omega}{W}{W'}\) and \(\nt{\upsilon}{V'}{V}\) natural transformations.
  \begin{itemize}
    \item If both internal Nats \(\liHom{V}{W}\) and \(\liHom{V}{W'}\) exist, then there is a morphism
    \[\morf{\liHom{V}{\omega}}{\liHom{V}{W}}{\liHom{V}{W'}}.\]

    \item Analogously, when \(\liHom{V}{W}\) and \(\liHom{V'}{W}\) exist, there is
    \[\morf{\liHom{\upsilon}{W}}{\liHom{V}{W}}{\liHom{V'}{W}}.\]

    \item Whenever the four internal Nats in the following diagram exist, the arrows also exist and the diagram is commutative.
    \[\begin{tikzcd}
  {\liHom{V}{W}} & {\liHom{V}{W'}} \\
  {\liHom{V'}{W}} & {\liHom{V'}{W'}.}
  \arrow["{\liHom{\upsilon}{W}}"', from=1-1, to=2-1]
  \arrow["{\liHom{V'}{\omega}}", from=2-1, to=2-2]
  \arrow["{\liHom{V}{\omega}}", from=1-1, to=1-2]
  \arrow["{\liHom{\upsilon}{W'}}", from=1-2, to=2-2]
\end{tikzcd}
\]
  \end{itemize}
\end{prop}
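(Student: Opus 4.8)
The plan is to deduce all three clauses from the universal property of the end, using that the left internal Hom $\liHom{\?_1}{\?_2}$ is a bifunctor $\mc{C}^{op}\times\mc{C}\to\mc{C}$, contravariant in the first slot and covariant in the second, as recalled in Section~\ref{sec:Hopf-monads}. Write $(\liHom{V}{W},\lambda_D)$ for the universal wedge defining the end $\liHom{V}{W}$, so that for every $\morf{f}{X}{Y}$ in $\mc{D}$ the wedge condition
\[ \liHom{VX}{Wf}\,\lambda_X = \liHom{Vf}{WY}\,\lambda_Y \]
holds. For the first clause I would set $\xi_D := \liHom{VD}{\omega_D}\,\lambda_D$ and check that $(\liHom{V}{W},\xi_D)_{D\in\mc{D}}$ is a wedge for the functor $\liHom{V\?_{1}}{W'\?_{2}}$; the universal property of $\liHom{V}{W'}$, with projections $\lambda'_D$, then yields a unique morphism $\liHom{V}{\omega}$ characterised by $\lambda'_D\,\liHom{V}{\omega} = \xi_D$, which is the asserted arrow.

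Verifying the wedge condition for $\xi$ is a short chase combining three ingredients: covariant functoriality of $\liHom{VX}{\?}$ collapses $\liHom{VX}{W'f}\,\liHom{VX}{\omega_X}$ into $\liHom{VX}{W'f\,\omega_X}$; naturality of $\omega$ rewrites $W'f\,\omega_X = \omega_Y\,Wf$; and the wedge condition for $\lambda$ turns $\liHom{VX}{Wf}\,\lambda_X$ into $\liHom{Vf}{WY}\,\lambda_Y$. A final appeal to the commutativity of the two partial actions of the bifunctor, namely $\liHom{VX}{\omega_Y}\,\liHom{Vf}{WY} = \liHom{Vf}{W'Y}\,\liHom{VY}{\omega_Y}$, brings the expression into the shape $\liHom{Vf}{W'Y}\,\xi_Y$ demanded by the wedge condition for $\liHom{V\?_{1}}{W'\?_{2}}$. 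The second clause is strictly dual: set $\eta_D := \liHom{\upsilon_D}{WD}\,\lambda_D$, use the contravariant functoriality of $\liHom{\?}{WD}$ together with naturality of $\upsilon$ and the same wedge condition for $\lambda$, and invoke the universal property of $\liHom{V'}{W}$; no new idea is needed, only the exchange of the two variables and of covariance for contravariance.

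For the third clause both composites $\liHom{V'}{\omega}\,\liHom{\upsilon}{W}$ and $\liHom{\upsilon}{W'}\,\liHom{V}{\omega}$ are morphisms $\liHom{V}{W}\to\liHom{V'}{W'}$, so by the uniqueness part of the universal property of $\liHom{V'}{W'}$ it suffices to check that they agree after post-composition with each projection $\lambda''_D$. Unwinding the defining equations of the four maps reduces the first composite to $\liHom{V'D}{\omega_D}\,\liHom{\upsilon_D}{WD}\,\lambda_D$ and the second to $\liHom{\upsilon_D}{W'D}\,\liHom{VD}{\omega_D}\,\lambda_D$; these coincide because they are the two routes around the naturality square of $\liHom{\?_1}{\?_2}$ evaluated at $\upsilon_D$ and $\omega_D$ (both equal $\liHom{\upsilon_D}{\omega_D}$). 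Hence the square commutes.

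I expect no serious obstacle, since the whole statement is an instance of the functoriality of ends; the only thing requiring care is the bookkeeping of variance, keeping the contravariant first argument and the covariant second argument straight so that each wedge condition is stated with the correct partial action. Alternatively one could phrase the argument uniformly by exhibiting, for each pair $(\upsilon,\omega)$, a natural transformation between the bifunctors $\liHom{V\?_{1}}{W\?_{2}}$ and $\liHom{V'\?_{1}}{W'\?_{2}}$ on $\mc{D}^{op}\times\mc{D}$ and appealing to the general fact that ends depend functorially on such transformations, which is precisely the enrichment noted in Remark~\ref{rem:Functor-enriched}.
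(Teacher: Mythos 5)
Your proposal is correct and follows essentially the same route as the paper: define \(\liHom{V}{\omega}\) (resp.\ \(\liHom{\upsilon}{W}\)) by composing the universal wedge projections \(\lambda_{D}\) with \(\liHom{VD}{\omega_{D}}\) (resp.\ \(\liHom{\upsilon_{D}}{WD}\)), verify the wedge condition, and invoke the universal property of the end. The paper only sketches the first bullet and leaves the other two implicit; your verification of the wedge condition and your uniqueness argument for the commuting square via the interchange law of the bifunctor \(\liHom{\?_{1}}{\?_{2}}\) supply exactly the omitted details.
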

\begin{proof}
  In the first case, for any \(D\) in \(\mc{D}\) the composition of \(\morf{\lambda_{D}}{\liHom{V}{W}}{\liHom{VD}{WD}}\)
  and \(\morf{\liHom{VD}{\omega_{D}}}{\liHom{VD}{WD}}{\liHom{VD}{W'D}}\) defines a wedge for the
  functor \(\liHom{V\?_{1}}{W' \?_{2}}\).
  Define \( \liHom{V}{\omega} := \int_{D \in \mc{D}} \liHom{VD}{\omega_{D}} \lambda_{D}\).
\end{proof}

\begin{prop}\label{prop:Ends-composition}
  Take \(\fc{W}{\mc{D}}{\mc{C}}\) such that the internal Nat \(\liHom{W}{W}\) exist.
  There is a canonical monoid structure on \(\liHom{W}{W}\) such that
  for any \(D\) in \(\mc{D}\) the morphism \(\morf{\lambda_{D}}{\liHom{W}{W}}{\liHom{WD}{WD}}\) is a monoid homomorphism.
\end{prop}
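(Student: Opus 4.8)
The plan is to transport, via the universal property of the end, the monoid structure that every endomorphism object already carries in a left closed category. Recall first that for each object $Z$ of $\mc{C}$ the object $\liHom{Z}{Z}$ is a monoid: its multiplication is the internal composition $\fc{c_{Z}}{\liHom{Z}{Z}\otimes\liHom{Z}{Z}}{\liHom{Z}{Z}}$ obtained by currying the double evaluation $\varepsilon^{Z}_{Z}\,(\liHom{Z}{Z}\otimes\varepsilon^{Z}_{Z})$, and its unit is the name of the identity $\morf{j_{Z}=\curry{\iid{Z}}}{\mb{1}}{\liHom{Z}{Z}}$; associativity and unitality are the standard facts about closed categories, which we may state without associator/unitor bookkeeping thanks to the strictness theorems invoked in Section~\ref{sec:Hopf-monads}. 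Writing $\lambda_{D}$ for the legs of $\liHom{W}{W}=\int_{D}\liHom{WD}{WD}$, I specialise $Z=WD$ and abbreviate $c_{WD}=c_{D}$, $j_{WD}=j_{D}$.

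Next I would define the multiplication $\fc{m}{\liHom{W}{W}\otimes\liHom{W}{W}}{\liHom{W}{W}}$ by showing that the family $\xi_{D}:=c_{D}\,(\lambda_{D}\otimes\lambda_{D})$ is a wedge for $\liHom{W\?_{1}}{W\?_{2}}$ and setting $m:=\int_{D}\xi_{D}$, so that $\lambda_{D}\,m=c_{D}\,(\lambda_{D}\otimes\lambda_{D})$ by construction. I expect the wedge condition to be the main obstacle. For $\morf{f}{D}{D'}$ it reads $\liHom{WD}{Wf}\,\xi_{D}=\liHom{Wf}{WD'}\,\xi_{D'}$. Using the compatibility of internal composition with post- and with pre-composition, the left side becomes an instance of composition applied to $\lambda_{f}\otimes\lambda_{D}$ and the right side an instance applied to $\lambda_{D'}\otimes\lambda_{f}$, where $\lambda_{f}:=\liHom{WD}{Wf}\,\lambda_{D}=\liHom{Wf}{WD'}\,\lambda_{D'}$ is the common composite supplied by the wedge axiom for the $\lambda_{D}$. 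The two agree by associativity of internal composition, the two presentations of $\lambda_{f}$ being exactly what is needed: informally, with $\bullet$ denoting internal composition, $\lambda_{f}\bullet\lambda_{D}=(\lambda_{D'}\bullet Wf)\bullet\lambda_{D}=\lambda_{D'}\bullet(Wf\bullet\lambda_{D})=\lambda_{D'}\bullet\lambda_{f}$.

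The unit $\morf{u}{\mb{1}}{\liHom{W}{W}}$ is produced in the same way from the family $(j_{D})_{D}$; here the wedge condition is immediate, since both $\liHom{WD}{Wf}\,j_{D}$ and $\liHom{Wf}{WD'}\,j_{D'}$ equal the name $\curry{Wf}$. Thus $u:=\int_{D}j_{D}$ and $\lambda_{D}\,u=j_{D}$.

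Finally I would verify the monoid axioms for $(\liHom{W}{W},m,u)$ and read off that each $\lambda_{D}$ is a monoid homomorphism. Both follow from the observation that the legs of an end, being the limiting cone of a limit, are jointly monic, so it suffices to check associativity and the unit laws after composing with an arbitrary $\lambda_{D}$. Using $\lambda_{D}\,m=c_{D}\,(\lambda_{D}\otimes\lambda_{D})$ and $\lambda_{D}\,u=j_{D}$, these reduce respectively to $c_{D}\,(c_{D}\otimes\mathrm{id})=c_{D}\,(\mathrm{id}\otimes c_{D})$ and to the unit laws of $(\liHom{WD}{WD},c_{D},j_{D})$, which hold by the first step. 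The very identities $\lambda_{D}\,m=c_{D}\,(\lambda_{D}\otimes\lambda_{D})$ and $\lambda_{D}\,u=j_{D}$ say that $\lambda_{D}$ is a monoid homomorphism, which completes the argument. Alternatively, the whole statement is subsumed by Remark~\ref{rem:Functor-enriched}: $\liHom{W}{W}$ is the endomorphism object of $W$ in the left $\mc{C}$-enriched category $\mC{Fun}(\mc{D},\mc{C})$, and endomorphism objects in an enriched category are monoids whose hom-components are monoid maps.
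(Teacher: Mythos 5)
Your proposal is correct and follows essentially the same route as the paper: build the unit and multiplication by exhibiting the families $\curry{\iid{WD}}$ and $\mu_{D}(\lambda_{D}\otimes\lambda_{D})$ as wedges, pass to the end, and check the monoid axioms component-wise, which simultaneously shows each $\lambda_{D}$ is a monoid map. You simply spell out the wedge verification and the joint monicity of the legs that the paper leaves to a footnote and to the phrase ``verified component-wise''.
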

\begin{proof}
  Remember that for any  \(D\) in \(\mc{D}\), \(\liHom{WD}{WD}\) is a monoid
  denoted \(\mFc{End} \ldual{[WD]}\).
  The units \((\morf{\curry{\iid{WD}}}{\mb{1}}{WD})_{D \in \mc{D}}\) make a wedge,
  therefore take \(\eta = \int_{D \in \mc{D}}\curry{\iid{WD}}\) as unit for \(\liHom{W}{W}\).

  For the multiplication, compose
  \(\morf{\lambda_{D} \otimes \lambda_{D}}{\liHom{W}{W}\otimes \liHom{W}{W}}{\liHom{WD}{WD} \otimes \liHom{WD}{WD}}\)
  with \(\morf{\mu_{D}}{\mFc{End} \ldual{[WD]} \otimes\mFc{End} \ldual{[WD]} }{\mFc{End} \ldual{[WD]}}\),
  verify that this conforms a wedge\footnote{This uses that the compositions are associative and that \(\lambda_{D}\) is a wedge.}
  and define
  \( \mu = \int_{D \in \mc{D}} \mu_{D}(\lambda_{D} \otimes \lambda_{D})\).
  Finally, the monoid axioms are verified component-wise.
\end{proof}

Following the usual terminology we will denote \(\liHom{W}{W}\) by
\(\mFc{End} \ldual{[W]}\) and call it the \defTerm{left internal End} of the
functor \(W\). The next proposition formalizes the idea of restricting the
endomorphisms of \(W\) along a functor \(\fc{G}{\mc{D}'}{\mc{D}}\).

\begin{prop} \label{prop:Ends-restriction} Take \(\fc{W}{\mc{D}}{\mc{C}}\) and
  \(\fc{G}{\mc{D}'}{\mc{D}}\) such that \(\mFc{End} \ldual{[W]}\) and
  \(\mFc{End} \ldual{[WG]}\) both exist, then there is a canonical monoid
  homomorphism \(\morf{|G}{\mFc{End} \ldual{[W]}}{\mFc{End} \ldual{[WG ]}}\).
\end{prop}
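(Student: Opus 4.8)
The plan is to construct $|G$ directly from the universal property of the end defining $\mFc{End}\ldual{[WG]}$, and then to check that it respects unit and multiplication by testing against the structural projections. Write $(\mFc{End}\ldual{[W]}, \lambda_D)_{D \in \mc{D}}$ and $(\mFc{End}\ldual{[WG]}, \lambda'_{D'})_{D' \in \mc{D}'}$ for the two universal wedges, following Definition~\ref{def:Internal-Nat}. First I would reindex the projections of $\mFc{End}\ldual{[W]}$ through $G$, obtaining the family $(\morf{\lambda_{GD'}}{\mFc{End}\ldual{[W]}}{\liHom{WGD'}{WGD'}})_{D' \in \mc{D}'}$, and observe that it is a wedge for the functor $\liHom{WG\?_{1}}{WG\?_{2}}$ whose end is $\mFc{End}\ldual{[WG]}$. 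Indeed, the wedge condition to be verified for $\lambda_{GD'}$ at a morphism $\morf{f'}{X'}{Y'}$ in $\mc{D}'$ is literally the wedge condition for $\lambda_{D}$ at the morphism $\morf{Gf'}{GX'}{GY'}$ in $\mc{D}$, which holds because $(\mFc{End}\ldual{[W]}, \lambda_D)$ is a wedge. The universal property of the end then yields a unique $\morf{|G}{\mFc{End}\ldual{[W]}}{\mFc{End}\ldual{[WG]}}$ with $\lambda'_{D'}\,(|G) = \lambda_{GD'}$ for every $D'$; this is the announced morphism, $|G = \int_{D' \in \mc{D}'} \lambda_{GD'}$.

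It remains to show that $|G$ is a monoid homomorphism for the structures of Proposition~\ref{prop:Ends-composition}. Since the unit $\eta'$ and multiplication $\mu'$ of $\mFc{End}\ldual{[WG]}$ are each uniquely determined by their composites with the projections $\lambda'_{D'}$, it suffices to verify the two identities after postcomposing with an arbitrary $\lambda'_{D'}$. For the unit, I would compute $\lambda'_{D'}\,(|G)\,\eta = \lambda_{GD'}\,\eta = \curry{\iid{WGD'}}$, using the defining property of the unit $\eta$ of $\mFc{End}\ldual{[W]}$, and compare with $\lambda'_{D'}\,\eta' = \curry{\iid{WGD'}}$; uniqueness in the universal property then gives $(|G)\,\eta = \eta'$.

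For the multiplication the same strategy applies. Denoting by $\mu_D$ the local composition on $\liHom{WD}{WD} = \mFc{End}\ldual{[WD]}$, I would compute on one side $\lambda'_{D'}\,(|G)\,\mu = \lambda_{GD'}\,\mu = \mu_{GD'}(\lambda_{GD'} \otimes \lambda_{GD'})$, and on the other $\lambda'_{D'}\,\mu'\,((|G) \otimes (|G)) = \mu'_{D'}\big((\lambda'_{D'}(|G)) \otimes (\lambda'_{D'}(|G))\big) = \mu'_{D'}(\lambda_{GD'} \otimes \lambda_{GD'})$. The one point deserving attention, rather than being purely formal, is that the local multiplications $\mu_{GD'}$ and $\mu'_{D'}$ coincide: both are the canonical composition on the internal End of the single object $WGD'$, which depends only on that object and not on whether it is indexed through $\mc{D}$ via $GD'$ or directly through $\mc{D}'$ via $D'$. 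Granting this identification, the two sides agree and uniqueness yields $(|G)\,\mu = \mu'((|G) \otimes (|G))$, completing the argument. I do not expect a genuine obstacle here: the construction is an immediate application of the universal property, and the only real care required is the bookkeeping identification of the component-wise monoid structures just mentioned.
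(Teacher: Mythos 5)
Your construction is exactly the paper's: the proof there consists of the single line defining \(|G := \int_{X \in \mc{D}'} \lambda_{GX}\) via the reindexed wedge, which is precisely your first paragraph. Your additional verification that \(|G\) respects the unit and multiplication of Proposition~\ref{prop:Ends-composition} (by testing against the jointly monic projections \(\lambda'_{D'}\)) is correct and fills in a check the paper leaves implicit.
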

\begin{proof}
The map \(|G\) is defined by \(|G:= \int_{X \in \mc{D}'} \lambda_{GX}\) when \(\mFc{End} \ldual{[W]}= (\mFc{End} \ldual{[W]}, \lambda_{D})_{D \in \mc{D}}\).
\end{proof}

\begin{lem} \label{lem:Ends-universal-action} Take \(\fc{W}{\mc{D}}{\mc{C}}\)
  such that \(\mFc{End} \ldual{[W]}\) exists. There is a unique functor
  \(\fc{\overline{W}}{\mc{D}}{\mc{C}}^{\mFc{End} \ldual{[W]}}\) which lifts
  \(W\) to the category of actions of \(\mFc{End} \ldual{[W]}\). This means that
  \(W = U \overline{W}\) where
  \(\fc{U}{\mc{C}^{\mFc{End} \ldual{[W]}}}{\mc{C}}\) is the forgetful functor.
\end{lem}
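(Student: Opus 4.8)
The plan is to recognize $\mathcal{E} := \mFc{End}\ldual{[W]} = \liHom{W}{W}$ as the $\otimes$-representable monad $\mathcal{E} \otimes \?$ and to produce $\overline{W}$ by exhibiting its lifting data. By Proposition~\ref{prop:Ends-composition}, $\mathcal{E}$ is a monoid whose projections $\morf{\lambda_{D}}{\mathcal{E}}{\liHom{WD}{WD}}$ are monoid homomorphisms. Regarding $\mc{D}$ as equipped with the identity monad $\Id{\mc{D}}$ (so that $\mc{D}^{\Id{\mc{D}}} = \mc{D}$) and $\mc{C}$ with the monad $\mathcal{E} \otimes \?$, Theorem~\ref{thm:lifts-eq-lifting-data} specializes to a bijection between functors $\fc{\overline{W}}{\mc{D}}{\mc{C}^{\mathcal{E}}}$ with $U\overline{W} = W$ and lifting data for $W$. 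Because the source monad is trivial, the axioms of Definition~\ref{def:Lifting-data} collapse to the $\mathcal{E}$-action axioms, so a lift is exactly a natural transformation $\nt{\gamma}{\mathcal{E} \otimes W}{W}$ with $\gamma_{D}(\mu \otimes WD) = \gamma_{D}(\mathcal{E} \otimes \gamma_{D})$ and $\gamma_{D}(\eta \otimes WD) = \iid{WD}$. Thus both existence and uniqueness of $\overline{W}$ reduce to the same questions for such a natural action $\gamma$.

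For the construction I would use that each internal endomorphism monoid $\liHom{WD}{WD}$ acts tautologically on $WD$ through the counit $\morf{\varepsilon^{WD}_{WD}}{\liHom{WD}{WD} \otimes WD}{WD}$ of $\? \otimes WD \dashv \liHom{WD}{\?}$, and define $\gamma_{D} := \uncurry{\lambda_{D}} = \varepsilon^{WD}_{WD}(\lambda_{D} \otimes WD)$, i.e. this tautological action restricted along the monoid homomorphism $\lambda_{D}$. The unit axiom then holds because $\lambda_{D}\eta = \curry{\iid{WD}}$ (as $\lambda_{D}$ is a monoid homomorphism) and $\uncurry{\curry{\iid{WD}}} = \iid{WD}$; associativity holds because $\lambda_{D}\mu = \mu_{D}(\lambda_{D} \otimes \lambda_{D})$ reduces it to the standard module law $\varepsilon^{WD}_{WD}(\mu_{D} \otimes WD) = \varepsilon^{WD}_{WD}(\liHom{WD}{WD} \otimes \varepsilon^{WD}_{WD})$ for internal composition.

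The real work is naturality of $\gamma$ in $D$, that is $Wf\,\gamma_{D} = \gamma_{D'}(\mathcal{E} \otimes Wf)$ for every $\morf{f}{D}{D'}$. I would establish it by splicing four facts: naturality of $\varepsilon^{WD}$ in its covariant slot, $Wf\,\varepsilon^{WD}_{WD} = \varepsilon^{WD}_{WD'}(\liHom{WD}{Wf} \otimes WD)$; dinaturality in the contravariant slot, $\varepsilon^{WD}_{WD'}(\liHom{Wf}{WD'} \otimes WD) = \varepsilon^{WD'}_{WD'}(\liHom{WD'}{WD'} \otimes Wf)$; the defining wedge identity of the end, $\liHom{WD}{Wf}\,\lambda_{D} = \liHom{Wf}{WD'}\,\lambda_{D'}$; and bifunctoriality of $\otimes$. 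Starting from $Wf\,\gamma_{D} = Wf\,\varepsilon^{WD}_{WD}(\lambda_{D} \otimes WD)$, the covariant naturality and the wedge identity move the computation through $\varepsilon^{WD}_{WD'}((\liHom{Wf}{WD'}\lambda_{D'}) \otimes WD)$, the contravariant dinaturality converts the evaluation at $WD$ into evaluation at $WD'$, and the interchange law reassembles the result as $\varepsilon^{WD'}_{WD'}(\lambda_{D'} \otimes WD')(\mathcal{E} \otimes Wf) = \gamma_{D'}(\mathcal{E} \otimes Wf)$.

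For uniqueness I would run the transpose argument in reverse: given any natural $\mathcal{E}$-action, the same naturality-to-wedge correspondence shows that the curried components $\curry{\gamma_{D}}$ form a wedge for $\liHom{W\?_{1}}{W\?_{2}}$, whence the universal property of the end $\mathcal{E} = \int_{D} \liHom{WD}{WD}$ identifies the action with the tautological one built from the projections $\lambda_{D}$, so $\gamma = \uncurry{\lambda}$ and $\overline{W}$ is determined through the bijection of Theorem~\ref{thm:lifts-eq-lifting-data}. The main obstacle is precisely the naturality computation: its delicacy lies in keeping the two variances of the internal-hom counit straight against the dinaturality of the end, all while working up to the monoidal coherences, which by the strictness theorem may be suppressed.
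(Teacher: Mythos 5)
Your construction of $\overline{W}$ coincides with the paper's: the proof in the text consists solely of the definition $\overline{W}D = (WD, \uncurry{\lambda_{D}})$, and your verifications --- the action axioms via the monoid-homomorphism property of $\lambda_{D}$ from Proposition~\ref{prop:Ends-composition}, naturality of $\gamma$ via the wedge identity together with the two variances of the evaluation, and functoriality via Theorem~\ref{thm:lifts-eq-lifting-data} applied to the identity monad on $\mc{D}$ and the monad $\mFc{End}\ldual{[W]}\otimes\?$ on $\mc{C}$ --- correctly supply the routine checks the paper suppresses. The existence half is sound.

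The gap is in your uniqueness argument. Given an arbitrary lift, i.e.\ a natural transformation $\nt{\gamma}{\mFc{End}\ldual{[W]}\otimes W}{W}$ satisfying the action axioms, you correctly observe that the curried components $\curry{\gamma_{D}}$ form a wedge into $\liHom{W\?_{1}}{W\?_{2}}$. But the universal property of the end then produces only a unique morphism $\morf{g}{\mFc{End}\ldual{[W]}}{\mFc{End}\ldual{[W]}}$ with $\curry{\gamma_{D}} = \lambda_{D}\, g$ for every $D$; nothing forces $g = \iid{\mFc{End}\ldual{[W]}}$. Since the projections $\lambda_{D}$ are jointly monic and each $\lambda_{D}\,g$ is a monoid homomorphism, $g$ is a monoid endomorphism of $\mFc{End}\ldual{[W]}$, and conversely every monoid endomorphism yields a lift by twisting the tautological one. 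So your argument classifies lifts of $W$ by monoid endomorphisms of $\mFc{End}\ldual{[W]}$ rather than proving there is exactly one: already for $\mc{D}$ the terminal category and $W$ selecting a two-element set $X$ in $\mC{Sets}$, conjugation by the swap is a nontrivial monoid automorphism of $\liHom{X}{X}$ giving a second lift. The paper's own proof is silent on uniqueness, so it offers no repair; the clause should be read as asserting a canonical lift (the unique one with $\curry{\gamma_{D}} = \lambda_{D}$), or else it requires an additional hypothesis.
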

\begin{proof}
  For \(D\) in \(\mc{D}\) define
  \(\overline{W} D = (WD, \uncurry{\lambda_{D}})\) where
  \(\morf{\lambda_{D}}{\mFc{End} \ldual{[W]}}{\liHom{WD}{WD}}\) and
  \(\morf{\uncurry{\lambda_{D}}}{\mFc{End} \ldual{[W]} \otimes WD}{WD}\) is
  obtained by un-currying.
\end{proof}

\subsection{Tannakian reconstruction of stabilizers} \label{sec:reconstr}

The above construction of internal Nats gives us a path for reconstruction. Let

Let \((\mc{\bar{C}}, \otimes, \mb{1})\) be a monoidal category,
with all small limits and
left closed over a small subcategory \(\mc{C} \hookrightarrow \mc{\bar{C}}\),
i.e.
for every \(X\) in \(\mc{C}\) the functor \(\fc{\? \otimes X}{\mc{\bar{C}}}{\mc{\bar{C}}}\)
has a right adjoint \(\liHom{X}{\?}\).
In this case, we denote by \(\mFc{End} \ldual{[\mc{C}]}\) the internal End for the
inclusion functor \(\mc{C} \hookrightarrow \mc{\bar{C}}\)
which exists in \(\mc{\bar{C}}\).
Moreover, for a small category \(D\)
every pair of functors \(\fc{V,W}{\mc{D}}{\mc{C}}\),
after composing with the inclusion \(\mc{C} \hookrightarrow \mc{\bar{C}}\)
has an internal Nat in \(\mc{\bar{C}}\),
which abusing a little of notation we call \(\liHom{V}{W}\).

From a more general perspective,
the next proposition defines a unit for the
Tannakian reconstruction adjunction.

\begin{prop} \label{prop:reconstruction-unit}
  Let \((A, m, u)\) be a monoid in \(\mc{\bar{C}}\), \(\mc{C}^{A} \hookrightarrow \mc{\bar{C}}^{A}\)
  the subcategory of \(A\)-actions on \(\mc{C}\) and
  \(\fc{U}{\mc{C}^{A}}{\mc{C}}\) the forgetful functor.
  There is a monoid homomorphism \(\morf{\rho}{A}{\mFc{End} \ldual{[U]}}\)such that
  the functors \(\fc{\overline{U}}{\mc{C}^{A}}{\mc{\bar{C}}}^{\mFc{End} \ldual{[U]}}\), obtained by \ref{lem:Ends-universal-action},
  and \(\fc{K}{\mc{\bar{C}}^{\mFc{End} \ldual{[U]}}}{\mc{\bar{C}}^{A}}\), corresponding to the monad homomorphism \(\rho \otimes \?\),
  compose to the inclusion \(\mc{C}^{A} \hookrightarrow \mc{\bar{C}}^{A}\).
\end{prop}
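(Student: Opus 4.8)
The plan is to build $\rho$ as an end of the family of \emph{curried} actions, and then read both assertions off the universal property of that end, so that the monoid and action axioms reappear after uncurrying.

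First I would set, for each $A$-action $M=(X,\morf{r_{M}}{A\otimes X}{X})$, the morphism $\morf{\curry{r_{M}}}{A}{\liHom{UM}{UM}}$ obtained by currying the structure map (here $UM=X$). I claim $(\curry{r_{M}})_{M\in\mc{C}^{A}}$ is a wedge for $\liHom{U\?_{1}}{U\?_{2}}$. For an $A$-morphism $\morf{f}{M}{N}$ the wedge condition $\liHom{UM}{Uf}\circ\curry{r_{M}}=\liHom{Uf}{UN}\circ\curry{r_{N}}$ uncurries precisely to $Uf\circ r_{M}=r_{N}\circ(A\otimes Uf)$, which is equivariance of $f$; so the condition holds. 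I may therefore set $\rho=\int_{M\in\mc{C}^{A}}\curry{r_{M}}$, the induced morphism into $\mFc{End}\ldual{[U]}$, characterized by $\lambda_{M}\circ\rho=\curry{r_{M}}$ for every $M$.

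Next I would check that $\rho$ is a monoid homomorphism, using the universal property of the end together with the monoid structure on $\mFc{End}\ldual{[U]}$ described in Proposition \ref{prop:Ends-composition}. Since $\lambda_{M}\circ\rho=\curry{r_{M}}$, it suffices to test both identities after post-composing with every $\lambda_{M}$. Unitality reduces to $\curry{r_{M}}\circ u=\curry{\iid{UM}}$, which uncurries to the unit axiom $r_{M}\circ(u\otimes UM)=\iid{UM}$ of the action. Multiplicativity reduces, using $\lambda_{M}\circ\mu=\mu_{M}\circ(\lambda_{M}\otimes\lambda_{M})$, to $\curry{r_{M}}\circ m=\mu_{M}\circ(\curry{r_{M}}\otimes\curry{r_{M}})$ with $\mu_{M}$ the composition of $\mFc{End}\ldual{[UM]}$; this is exactly the statement that $\curry{r_{M}}$ is a monoid homomorphism, which uncurries to the associativity axiom $r_{M}\circ(m\otimes UM)=r_{M}\circ(A\otimes r_{M})$. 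Hence $\rho$ preserves unit and multiplication.

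Finally I would verify $K\overline{U}=(\mc{C}^{A}\hookrightarrow\mc{\bar{C}}^{A})$. By Lemma \ref{lem:Ends-universal-action}, $\overline{U}M=(UM,\uncurry{\lambda_{M}})$, and by Remark \ref{rem:Notation-monad-hom} the functor $K$ corresponding to $\rho\otimes\?$ sends this to $(UM,\uncurry{\lambda_{M}}\circ(\rho\otimes UM))$. Using that uncurrying is natural in precomposition together with $\lambda_{M}\circ\rho=\curry{r_{M}}$,
\[
\uncurry{\lambda_{M}}\circ(\rho\otimes UM)=\uncurry{\lambda_{M}\circ\rho}=\uncurry{\curry{r_{M}}}=r_{M},
\]
so $K\overline{U}M=(UM,r_{M})=M$ as an object of $\mc{\bar{C}}^{A}$. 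On morphisms both $K\overline{U}$ and the inclusion lie over $U$ (by Lemma \ref{lem:Ends-universal-action} and Corollary \ref{cor:Monad-hom-funtors}), and since the forgetful functor is faithful and the two functors already agree on objects, they coincide. I expect the only real friction to be the bookkeeping in the monoid-homomorphism step: one must match the abstractly defined $\mu=\int_{M}\mu_{M}(\lambda_{M}\otimes\lambda_{M})$ against $\rho$ componentwise and translate the curry/uncurry adjunction so that the unit and associativity axioms of each $M$ surface; everything else is a direct application of the universal property of ends.
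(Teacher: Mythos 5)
Your proof is correct and follows essentially the same route as the paper: you define \(\rho\) as the morphism induced by the wedge of curried actions \((\curry{r_{M}})_{M}\) (where the wedge condition uncurries to equivariance of \(A\)-morphisms), and then identify \(K\overline{U}M\) with \(M\) via \(\uncurry{\lambda_{M}\rho}=\uncurry{\curry{r_{M}}}=r_{M}\). The only difference is that you additionally spell out the verification that \(\rho\) respects the unit and multiplication of \(\mFc{End}\ldual{[U]}\), a point the paper's proof leaves implicit.
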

\begin{proof}

  For every \(A\)-action
  \(M = (X, (\morf{r_{M}}{A\otimes X}{X}))\) curry the action.
  Notice that for any \(A\)-action
  morphism to obtain \(\morf{\curry{r_{M}}}{A}{\liHom{X}{X}}\).
  homomorphism \(\morf{f}{M}{N}\) the diagram
 \[
\begin{tikzcd}
    A & {\liHom{ UM } { UM }} \\
    {\liHom{ UN } { UN }} & {\liHom{ UN } { UM }}
    \arrow["{\curry{r_N}}"', from=1-1, to=2-1]
    \arrow["{\liHom{ f } { UN }}", from=2-1, to=2-2]
    \arrow["{\curry{r_M}}", from=1-1, to=1-2]
    \arrow["{\liHom{ UM } { f  }}", from=1-2, to=2-2]
\end{tikzcd}
\]
commutes because
\(\curry{fr_{M}} = \liHom{X}{f} \curry{r_{M}}\) and
\(\curry{r_{N}(Z \otimes f) } = \liHom{f}{Y} \curry{r_{N}} \).

Therefore
\((A, \curry{r_{M}})_{M \in \mc{C}^{A}}\) is a wedge, and we obtain a morphism
\(\morf{\rho}{A}{\mFc{End} \ldual{[U]}}\) such that \(\curry{r_{M}} = \lambda_{M} \rho\).
Since \(\overline{U} M = (X, \uncurry{\lambda_{M}})\) then \(K \overline{U} M = (X, \uncurry{\lambda_{M}} (\rho \otimes X))\),
and \(\uncurry{\lambda_{M}} (\rho \otimes X) = \uncurry{\lambda_{M} \rho} = r_{M}\).
\end{proof}

Now we turn to the question of internal Ends which fix a natural transformation.
\begin{prop} \label{prop:internal-augmentation}
  Let \((A, m, u, \epsilon)\) be an augmented monoid in \(\mc{\bar{C}}\).
  Then we have a diagram of monoid homomorphisms
\[
  \begin{tikzcd}
    {A} & {\mFc{End} \ldual{[U]}} \\
    {\mb{1}} & {\mFc{End} \ldual{[\mc{C}]}},
    \arrow["{\epsilon}"', from=1-1, to=2-1]
    \arrow["{\rho}", from=1-1, to=1-2]
    \arrow["{\eta}"', from=2-1, to=2-2]
    \arrow["{|E}"', shift right=2, from=1-2, to=2-2]
    \arrow["{|U}"', shift right=2, from=2-2, to=1-2]
  \end{tikzcd}
\]
where \((|E)(|U) = \iid{\mFc{End} \ldual{[\mc{C}]}}\) and
\((|U) \eta \epsilon = \int_{M \in \mc{C}^{A}} \curry{\epsilon \otimes UM}\).
\end{prop}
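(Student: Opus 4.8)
The plan is to read off all five arrows as monoid homomorphisms from the preceding results and then to verify the two displayed equalities by testing against the wedge projections of the two internal Ends, closing each argument with the joint monicity of those projections. Write \(\morf{\lambda_{M}}{\mFc{End} \ldual{[U]}}{\liHom{UM}{UM}}\), for \(M \in \mc{C}^{A}\), for the wedge projections of \(\mFc{End} \ldual{[U]}\) (as in Proposition~\ref{prop:reconstruction-unit}) and \(\morf{\kappa_{X}}{\mFc{End} \ldual{[\mc{C}]}}{\liHom{X}{X}}\), for \(X \in \mc{C}\), for those of \(\mFc{End} \ldual{[\mc{C}]}\). That \(\rho\) is a monoid homomorphism is Proposition~\ref{prop:reconstruction-unit}; that \(|E\) and \(|U\) are is Proposition~\ref{prop:Ends-restriction}; \(\epsilon\) is one by the definition of an augmented monoid; and \(\eta\) is the unit of the monoid \(\mFc{End} \ldual{[\mc{C}]}\) from Proposition~\ref{prop:Ends-composition}, which is a monoid homomorphism out of the trivial monoid \(\mb{1}\) by the unit axiom. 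As a monoid homomorphism is determined by its underlying morphism, both identities may be checked directly in \(\mc{\bar{C}}\).

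I first record the defining equations of the two restrictions from Proposition~\ref{prop:Ends-restriction}: \(\lambda_{M}(|U) = \kappa_{UM}\) for every \(M\), and \(\kappa_{X}(|E) = \lambda_{EX}\) for every \(X\). Here I have used \(UE = \Id{\mc{C}}\) --- the trivial-action functor is a section of \(U\), cf.\ Remark~\ref{rem:Notation-monad-hom} --- to identify \(\mFc{End} \ldual{[UE]}\) with \(\mFc{End} \ldual{[\mc{C}]}\), so that the \(X\)-th projection of the former is \(\kappa_{X}\). For the first identity I then compute, for each \(X\) in \(\mc{C}\),
\[ \kappa_{X}(|E)(|U) = \lambda_{EX}(|U) = \kappa_{U(EX)} = \kappa_{X}, \]
the last equality because \(UEX = X\). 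Since the family \((\kappa_{X})_{X}\) is jointly monic (an end is a limit), this forces \((|E)(|U) = \iid{\mFc{End} \ldual{[\mc{C}]}}\).

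For the second identity I test against the projections \(\lambda_{M}\). Recalling from Proposition~\ref{prop:Ends-composition} that the unit of \(\mFc{End} \ldual{[\mc{C}]}\) satisfies \(\kappa_{X}\eta = \curry{\iid{X}}\), I obtain
\[ \lambda_{M}(|U)\eta\epsilon = \kappa_{UM}\eta\epsilon = \curry{\iid{UM}}\epsilon = \curry{\iid{UM}(\epsilon \otimes UM)} = \curry{\epsilon \otimes UM}, \]
where the third equality is the naturality of currying --- equivalently, of the adjunction \(\? \otimes UM \dashv \liHom{UM}{\?}\) --- along \(\morf{\epsilon}{A}{\mb{1}}\), and the fourth holds in the strictified category, where \(\mb{1} \otimes UM = UM\). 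Precomposing the universal wedge \((\mFc{End} \ldual{[U]}, \lambda_{M})\) with the morphism \((|U)\eta\epsilon\) therefore exhibits \((A, \curry{\epsilon \otimes UM})_{M}\) as a wedge whose induced comparison map is, by the uniqueness clause in the universal property of the end, the morphism \((|U)\eta\epsilon\) itself; hence \((|U)\eta\epsilon = \int_{M \in \mc{C}^{A}} \curry{\epsilon \otimes UM}\).

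No step presents a genuine obstacle: once the projection formulas of Proposition~\ref{prop:Ends-restriction} and the unit formula of Proposition~\ref{prop:Ends-composition} are in hand, each identity reduces to a short diagram chase completed by joint monicity of the end projections. The points demanding care are keeping the two projection families \(\lambda_{M}\) and \(\kappa_{X}\) distinct and obtaining the comparison map of the second identity from the universal property rather than constructing it by hand; the single genuine computation, the currying identity \(\curry{\iid{UM}}\epsilon = \curry{\epsilon \otimes UM}\), is immediate from the defining adjunction.
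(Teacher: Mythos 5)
Your proof is correct and follows essentially the same route as the paper: identify every arrow as a monoid homomorphism via Propositions~\ref{prop:reconstruction-unit}, \ref{prop:Ends-composition} and \ref{prop:Ends-restriction}, then verify the equalities componentwise against the end projections, with the key computation \(\curry{\iid{UM}}\epsilon = \curry{\epsilon \otimes UM}\) appearing in both arguments. The only difference is cosmetic: the paper phrases the second verification as the commutativity \((|E)\rho = \eta\epsilon\) of the square (and states \(|(UE)=\iid{}\) without unwinding the projections), whereas you check the two displayed identities directly, which if anything matches the statement more closely.
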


\begin{proof}
  Notice that the functor \(\fc{E}{\mc{C}}{\mc{C}^{T}}\),
  given by \(EX = (X,\morf{\epsilon\otimes X}{A\otimes X}{X})\) is a section for \(U\).
  The morphisms \(|E\) and \(|U\) are given by Proposition \ref{prop:Ends-restriction},
  and \((|U)(|E) = |(UE) = \iid{\mFc{End} \ldual{[\mc{C}]}} \).
  Lets check that \((|E) \rho = \eta \epsilon\):
  For any \(X\) in \(\mc{C}\), consider the universal wedge components \(\morf{\xi_{X}}{\mFc{End} \ldual{[C]}}{\liHom{X}{X}}\)
  and \(\morf{\lambda_{M}}{\mFc{End} \ldual{[U]}}{\liHom{UM}{UM}}\).
  On one hand \(\xi_{X}(|E) \rho = \lambda_{EX} \rho = \curry{r_{EX}} = \curry{\epsilon \otimes X}\) since \(\rho = \int_{M} \curry{r_{M}}\).
  On the other hand \(\xi_{X} \eta \epsilon = \curry{\iid{X}} \epsilon = \curry{\epsilon \otimes X}\) since \(\eta = \int_{X} \curry{\iid{X}}\).
\end{proof}
\begin{lem} \label{lem:Stab-as-equal}
  Take a monoid homomorphism \(\morf{h}{B}{A}\) in \(\mc{\bar{C}}\)
  and a natural transformation \(\nt{\alpha}{V}{U}\), then
  \((B\otimes \?)_{h \otimes \?}\) fixes \(V_{\alpha}\) if and only if
  \(h\) equalizes the diagram
\(
\begin{tikzcd}[cramped]
  {A} & {\mFc{End} \ldual{[U]}} & {\liHom{V}{U}}.
  \arrow["{\rho}", shift left=1.5, from=1-1, to=1-2]
  \arrow["{(|U) \eta \epsilon }"', shift right=1, from=1-1, to=1-2]
  \arrow["{\liHom{\alpha}{U}}", from=1-2, to=1-3]
\end{tikzcd}
  \)
\end{lem}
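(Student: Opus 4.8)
The plan is to read the equalizer condition through the universal property of the end \(\liHom{V}{U}\), turning it into one equation per object \(M \in \mc{C}^{A}\), and then to uncurry these equations so that they become the expanded fix relation. The whole argument is thus a translation between the two universal descriptions rather than a computation with an explicit end.

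First I would record the two descriptions that must be matched. On one side, unwinding Definition~\ref{def:fix} for the \(\otimes\)-representable data \(S = B\otimes\?\), \(h = h\otimes\?\) and \(e = \epsilon\otimes\?\), the assertion that \((B\otimes\?)_{h\otimes\?}\) fixes \(V_{\alpha}\) means, for every \(M=(X,r_{M})\) with \(X=UM\),
\[ r_{M}(h \otimes \alpha_{M}) = (\epsilon \otimes X)(h \otimes \alpha_{M}), \]
where I have used the interchange law \((h\otimes X)(B\otimes\alpha_{M}) = h\otimes\alpha_{M}\). On the other side, writing \(\morf{\lambda_{M}}{\mFc{End}\ldual{[U]}}{\liHom{UM}{UM}}\) for the universal wedge of \(\mFc{End}\ldual{[U]}=\liHom{U}{U}\) and \(\morf{\pi_{M}}{\liHom{V}{U}}{\liHom{VM}{UM}}\) for that of \(\liHom{V}{U}\), I would collect the three component identities already available: \(\lambda_{M}\rho = \curry{r_{M}}\) from Proposition~\ref{prop:reconstruction-unit}, \(\lambda_{M}(|U)\eta\epsilon = \curry{\epsilon\otimes UM}\) from Proposition~\ref{prop:internal-augmentation}, and \(\pi_{M}\liHom{\alpha}{U} = \liHom{\alpha_{M}}{UM}\lambda_{M}\) from the second bullet of Proposition~\ref{prop:Nats-bifunctor}.

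The core step is then purely formal. Since \(\liHom{V}{U}\) is an end, the two composites \(\liHom{\alpha}{U}\rho h\) and \(\liHom{\alpha}{U}(|U)\eta\epsilon h\) out of \(B\) coincide if and only if they coincide after postcomposition with every \(\pi_{M}\). Using the three identities above, these postcompositions equal \(\liHom{\alpha_{M}}{UM}\curry{r_{M}}h\) and \(\liHom{\alpha_{M}}{UM}\curry{\epsilon\otimes UM}h\) respectively, both morphisms \(B \to \liHom{VM}{UM}\). Now I would uncurry, using that uncurrying is natural in the domain (precomposition by \(h\otimes UM\)) and, crucially, that currying is natural in its contravariant slot, i.e. \(\uncurry{\liHom{\alpha_{M}}{UM}\psi} = \uncurry{\psi}(B\otimes\alpha_{M})\). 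Applied to \(\psi = \curry{r_{M}}h\) and \(\psi = \curry{\epsilon\otimes UM}h\) this yields exactly \(r_{M}(h\otimes\alpha_{M})\) and \((\epsilon\otimes X)(h\otimes\alpha_{M})\). Because (un)currying is a bijection, the \(M\)-th component equation holds precisely when the \(M\)-th fix equation does, and quantifying over all \(M\) closes the equivalence.

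The step requiring the most care — and the only genuine obstacle — is the bookkeeping around \(\liHom{\alpha}{U}\): it changes the indexing object of the end from \(\liHom{UM}{UM}\) to \(\liHom{VM}{UM}\), so I must make sure the wedge identity \(\pi_{M}\liHom{\alpha}{U}=\liHom{\alpha_{M}}{UM}\lambda_{M}\) is combined with the contravariant-currying naturality in the correct order, and that the evaluation and coevaluation mates used throughout are those of the left-closed structure \(\?\otimes X \dashv \liHom{X}{\?}\). Once these are pinned down, every remaining manipulation is a routine application of the adjunction bijection and the interchange law.
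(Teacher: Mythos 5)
Your proposal is correct and follows essentially the same route as the paper's proof: you reduce the equalizer condition to its components along the universal wedge of \(\liHom{V}{U}\), identify \(\xi_{M}\liHom{\alpha}{U}\rho h\) with \(\curry{r_{M}(h\otimes\alpha_{M})}\) and \(\xi_{M}\liHom{\alpha}{U}(|U)\eta\epsilon h\) with \(\curry{(\epsilon\otimes UM)(h\otimes\alpha_{M})}\), and conclude by the bijectivity of currying. You merely make explicit two points the paper leaves tacit, namely the joint monicity of the wedge components and the naturality of (un)currying in the contravariant slot, which is a welcome clarification rather than a deviation.
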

\begin{proof}
  For any \(M = (X, r_{M})\) in \(\mc{C}^{T}\), consider the universal wedge components
  \(\lambda_{M}\) and \(\xi_{M}\) displayed in the next diagram:
 \[\begin{tikzcd}
  B & A &{\mFc{End} \ldual{[U]}}& {\liHom{UM}{UM}} \\
  && {\liHom{V}{U}} & {\liHom{VM}{UM}}
  \arrow["{(|U) \eta \epsilon }"', shift right=1, from=1-2, to=1-3]
  \arrow["{ \liHom{\alpha}{U} }"', from=1-3, to=2-3]
  \arrow["{\xi_M}", from=2-3, to=2-4]
  \arrow["{\lambda_{M}}", from=1-3, to=1-4]
  \arrow["{ \liHom{\alpha_{M}}{UM} }", from=1-4, to=2-4]
  \arrow["\rho", shift left=1, from=1-2, to=1-3]
  \arrow["h", from=1-1, to=1-2]
  \end{tikzcd}\]
On one hand, since \(\rho = \int_{M} \curry{r_{M}}\)
we obtain
\begin{align*}
  \xi_{M} \liHom{\alpha}{U} \rho h
  &= \liHom{\alpha_{M}}{UM} \lambda_{M} \rho h
    = \liHom{\alpha_{M}}{UM} \curry{r_{M}} h  \\
  &= \curry{r_{M}(h \otimes \alpha_{M})}.
\end{align*}
On the other hand, since
\((|U) \eta \epsilon = \int_{M \in \mc{C}^{A}} \curry{\epsilon \otimes UM}\),
we obtain
\begin{align*}
  \xi_{M} \liHom{\alpha}{U} (|U) \eta \epsilon   h
  &= \liHom{\alpha_{M}}{UM} \lambda_{M} (|U) \eta \epsilon h
    =  \liHom{\alpha_{M}}{UM} \curry{\epsilon \otimes X} h \\
  &= \curry{(\epsilon \otimes UM)(h \otimes \alpha_{M})}.
\end{align*}
The result follows because uncurrying the last term in each equation gives the definition of the fix relation.
\end{proof}

\bibliographystyle{amsplain}
\bibliography{main}

\providecommand{\bysame}{\leavevmode\hbox to3em{\hrulefill}\thinspace}
\providecommand{\MR}{\relax\ifhmode\unskip\space\fi MR }
\providecommand{\MRhref}[2]{%
  \href{http://www.ams.org/mathscinet-getitem?mr=#1}{#2}
}
\providecommand{\href}[2]{#2}
\begin{thebibliography}{1}

\bibitem{boehm18_hopf_algeb_their_gener_categ}
Gabriella B{\"o}hm, \emph{Hopf algebras and their generalizations from a
  category theoretical point of view}, Lecture Notes in Mathematics, Springer
  International Publishing, 2018.

\bibitem{BrLaVi-HMMC}
Alain Bruguières, Steve Lack, and Alexis Virelizier, \emph{Hopf monads on
  monoidal categories}, Advances in Mathematics \textbf{227} (2011), no.~2, 745
  -- 800.

\bibitem{Etin-TC}
Pavel Etingof, Shlomo Gelaki, Dmitri Nikshych, and Victor Ostrik, \emph{Tensor
  categories}, Mathematical Surveys and Monographs, vol. 205, American
  Mathematical Society, 2011.

\bibitem{SGA1}
Alexander Grothendieck, \emph{Rev\^etements \'etales et groupe fondamental
  ({SGA} 1)}, Lecture Notes in Mathematics, vol. 224, Springer, 1971 (french).

\bibitem{MacLaneCategories}
Saunders MacLane, \emph{Categories for the working mathematician}, Graduate
  Texts in Mathematics, Springer, 1998 (english).

\end{thebibliography}
\end{document}